\documentclass[a4paper,11pt]{amsart}
\usepackage[utf8]{inputenc}
\usepackage[T1]{fontenc}
\usepackage{graphicx,xcolor,url}
\usepackage{amsmath,amssymb,amsthm,mathtools}
\usepackage[hidelinks]{hyperref}
\usepackage{aliascnt}
\usepackage[nameinlink,noabbrev]{cleveref}
\usepackage[a4paper,margin=2.6cm]{geometry}

\newaliascnt{cor}{thm}
\newtheorem{cor}[cor]{Corollary}
\aliascntresetthe{cor}
\newaliascnt{obs}{thm}
\newtheorem{obs}[obs]{Observation}
\aliascntresetthe{obs}
\newaliascnt{prop}{thm}

\aliascntresetthe{prop}
\newaliascnt{lem}{thm}
\newtheorem{lem}[lem]{Lemma}
\aliascntresetthe{lem}
\newaliascnt{rem}{thm}

\aliascntresetthe{rem}
\newaliascnt{ques}{thm}
\newtheorem{ques}[ques]{Question}
\aliascntresetthe{ques}
\newaliascnt{conj}{thm}

\aliascntresetthe{conj}
\theoremstyle{definition}
\newaliascnt{defi}{thm}

\aliascntresetthe{defi}
\Crefname{thm}{Theorem}{Theorems}
\Crefname{lem}{Lemma}{Lemmas}
\Crefname{prop}{Proposition}{Propositions}
\Crefname{cor}{Corollary}{Corollaries}
\Crefname{obs}{Observation}{Observations}
\Crefname{rem}{Remark}{Remarks}
\Crefname{defi}{Definition}{Definitions}
\Crefname{ques}{Question}{Questions}
\Crefname{conj}{Conjecture}{Conjectures}
\usepackage{thmtools}

\newcommand{\Span}{\operatorname{span}}

\newcommand{\girth}{\operatorname{girth}}
\newcommand{\F}{\mathbb F}

\title{Rings are $\chi$-bounded}

\author[S. Asensio]{Sara Asensio}
 \address{Instituto de Investigaci\'on en Matem\'aticas (IMUVa), Universidad de Valladolid, Valladolid, Spain}
 \email{sara.asensio@uva.es}

\author[I. García-Marco]{Ignacio García-Marco}
 \address{Instituto de Matem\'aticas y Aplicaciones (IMAULL), Secci\'on de Matem\'aticas, Facultad de
Ciencias, Universidad de La Laguna, 38200, La Laguna, Spain}
 \email{iggarcia@ull.edu.es}

\author[K. Knauer]{Kolja Knauer}
\address{Departament de Matemàtiques i Informàtica,
 Universitat de Barcelona, Barcelona, Spain;
 Centre de Recerca Matemàtica (CRM),
 Campus de Bellaterra, Edifici C,
 08193 Bellaterra, Barcelona, Spain.}
 \email{kolja.knauer@ub.edu}

\begin{document}
\maketitle

\begin{abstract}

We prove that there is a function $f:\mathbb N\to\mathbb N$ such that
$\chi(\Gamma(R))\le f(\omega(\Gamma(R)))$ 
for the zero-divisor graph of any ring $R$, if the clique number is
finite. 
On the one hand, this consolidates a disproved conjecture of Beck from 1988, claiming $\chi(\Gamma(R))=\omega(\Gamma(R))$ for unital commutative rings. While previous counterexamples satisfy  $\chi(\Gamma(R))\le \omega(\Gamma(R))+2$, we obtain the lower bound
$f\geq k^{\Omega(\log k)}$ among finite commutative rings. Finally, we show that neither zero-divisor graphs of finite commutative semirings nor those of finite commutative nonassociative rings are $\chi$-bounded.
\end{abstract}

\section{Introduction}\label{sec:introduction}

Unless specified otherwise, neither commutativity nor a neutral element are assumed for the multiplication in a \emph{ring} $R$ in this paper.
 
A nonzero element $x\in R$ is a \emph{one-sided zero-divisor} if $xy=0$ or $yx=0$
for some nonzero $y\in R$.  The \emph{zero-divisor graph} $\Gamma(R)$ has
the one-sided zero-divisors as vertices, with distinct vertices
$x,y$ adjacent when
\[
 xy=0\quad\text{or}\quad yx=0.
\]
For commutative rings this is the usual zero-divisor graph~\cite{AL99}. For noncommutative rings, a directed version with an arc $x\to y$ whenever $xy=0$ has been studied, see~\cite{Redmond02,Wu05,AM06}. In this case our graph is
the underlying simple graph obtained by forgetting the directions.

We write $\chi(\Gamma(R))$ and $\omega(\Gamma(R))$ for its chromatic and
clique numbers.  
The coloring problem originates in Beck's paper on commutative unital rings $R$
\cite{Beck88}\footnote{Beck considered the graph on all elements of a unital
commutative ring, with distinct elements adjacent when their product is zero.
After deleting the nonzero regular elements, which are adjacent only to $0$,
this differs from the usual zero-divisor graph only by adjoining $0$ as a
universal vertex and $\chi$-boundedness is not affected}, where he conjectured that $\chi(\Gamma(R))=\omega(\Gamma(R))$ in that setting. Beck's conjecture, or even perfectness of $\Gamma(R)$, is known for several important classes of rings, see~\cite{AAS11,PWJ17}. However, Anderson and Naseer disproved this conjecture \cite{AN93}, and
further counterexamples and families were developed in
\cite{AAS11,BDS98,DS98,Vietri13,Vietri15,TLW20,Kav22}. Our main result establishes $\chi$-boundedness for arbitrary rings. 

\begin{restatable}{thm}{ringchibounded}\label{thm:ring-chi-bounded}
There exists a function $f:\mathbb N\to\mathbb N$ such that $\chi(\Gamma(R))\le f\bigl(\omega(\Gamma(R))\bigr)$, for every
 ring $R$ with $\omega(\Gamma(R))<\infty$.
\end{restatable}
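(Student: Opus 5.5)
The plan is to reduce to the case where the set $Z$ of one-sided zero-divisors is finite, with its size controlled by $\omega=\omega(\Gamma(R))$, and then colour by annihilator data. Throughout I use the additive subgroups $\mathrm{Ann}_l(x)=\{y: yx=0\}$ and $\mathrm{Ann}_r(x)=\{y: xy=0\}$. The neighbourhood of a vertex $x$ is contained in $\mathrm{Ann}_l(x)\cup\mathrm{Ann}_r(x)$.

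\emph{First step: finiteness.} I would show that $\omega<\infty$ forces every nonzero annihilator $\mathrm{Ann}_l(x)$, $\mathrm{Ann}_r(x)$ to be finite, and then that $Z$ itself is finite. This is the noncommutative analogue of the Beck and Anderson--Naseer argument.
\begin{itemize}
\item Suppose some $A=\mathrm{Ann}_r(x)$ is infinite. Consider the elements $a\in A$ and their products $aA$.
\item Either infinitely many $a\in A$ satisfy $a^2=0$ together with $aA$ finite, or we find an infinite subgroup on which multiplication is ``small''.
\item An infinite Ramsey argument on the colouring of pairs $\{a,b\}$ by whether $ab=0$, $ba=0$, or neither, combined with the pigeonhole principle on the finitely many values $aA$ (cosets of a finite kernel), should yield an infinite clique. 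This contradicts $\omega<\infty$.
\item Once all annihilators are finite and $Z$ is covered by them, a second pigeonhole step should bound $|Z|$.
\end{itemize}
Even without an explicit bound on $|Z|$, finiteness lets me argue on a finite graph, so no compactness is needed.

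\emph{Second step: colouring.} I split $Z$ into $Z_0=\{x: x^2=0\}$ and $Z_1=Z\setminus Z_0$.

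On $Z_1$, I colour $x$ by the pair $(\mathrm{Ann}_l(x),\mathrm{Ann}_r(x))$. If $x,y\in Z_1$ share this pair and $xy=0$, then $y\in\mathrm{Ann}_r(x)=\mathrm{Ann}_r(y)$, so $y^2=0$, which is a contradiction. The case $yx=0$ is symmetric. Hence each colour class is independent. It then remains to bound the number of distinct annihilator pairs by a function of $\omega$.

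For that bound I would use the following.
\begin{itemize}
\item Each annihilator is a one-sided ideal.
\item The minimal nonzero annihilators pairwise multiply into one another in a way that produces cliques, e.g.\ picking $a_i$ in distinct minimal right annihilators with $a_ia_j=0$.
\item A Ramsey argument on a large family of distinct annihilators should therefore produce a clique of size $>\omega$.
\end{itemize}

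On $Z_0$, the squares vanish. I would apply the same annihilator-type colouring, refined by the finite lattice generated by annihilators of elements in a fixed maximal clique $K$ (with $|K|\le\omega$). Every vertex outside $K$ has a non-neighbour in $K$, so I can partition $Z_0$ by the pattern of (non)adjacency to $K$ and recurse on each part. The recursion has depth bounded in $\omega$, which gives a Gy\'arf\'as-style bound.

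\emph{Main obstacle.} I expect the main obstacle to be bounding the number of distinct annihilator classes, or the recursion depth on $Z_0$, purely in terms of $\omega$. The rest is bookkeeping. For this step I would first try to extract, from many distinct annihilators, a sequence $x_1,x_2,\dots$ with $x_ix_j=0$ for all $i<j$, for instance via a descending chain of annihilators and products of its elements. Such a sequence would directly give a large clique.
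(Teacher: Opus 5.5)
\textbf{There is a genuine gap, and it begins with your first step, which is false.} Take $R=\mathbb Z\times\mathbb Z$. Its one-sided zero-divisors are the elements $(a,0)$ and $(0,b)$ with $a,b\neq0$. $\Gamma(R)$ is the complete bipartite graph between these two infinite sets, so $\omega(\Gamma(R))=2$. Yet $Z$ is infinite, and $\mathrm{Ann}_r((1,0))=0\times\mathbb Z$ is an infinite nonzero annihilator. Your Ramsey dichotomy has nothing to grab here: no nonzero element of $0\times\mathbb Z$ squares to zero, and no two of them multiply to zero in either order.

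The Beck and Anderson--Naseer finiteness results concern the nilradical, not $Z$, and this is what the paper exploits.
\begin{itemize}
\item By Bell's theorem, an infinite nil ring contains an infinite zero subring. Zero subrings have at most $\omega+1$ elements, so the prime radical $P$ is finite, hence nilpotent, of index at most $2\omega$.
\item A Ramsey lemma modulo a finite ideal then does the work. Its edge colours record both the direction of the zero product and its value in $P$. Combined with choosing differences $d_i=a_i-b_i$ that are distinct up to sign, it yields $d_id_j=0$ for all $i<j$, hence a clique. This bounds $|P|$ and $\omega(\Gamma(R/P))$.
\item Bell's structure theorem gives $R/P=B\oplus C$. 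Here $B$ is reduced, and $\chi=\omega$ on it via a subdirect product of entire rings and the meet-semilattice colouring theorem. $C$ is a finite sum of matrix rings of bounded size.
\item Colourings are lifted through the pairs $(\widehat c_S(\bar x),p_x)$.
\end{itemize}
Your closing idea of extracting $x_1,x_2,\dots$ with $x_ix_j=0$ for $i<j$ has exactly the right shape. But it works for a large clique modulo a finite ideal, not for a family of distinct annihilators.

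\textbf{Independently of step one, the two bounds you defer carry all the content, and neither is supported.}
\begin{itemize}
\item Your colouring of $Z_1$ by $(\mathrm{Ann}_l(x),\mathrm{Ann}_r(x))$ is proper. However, nothing bounds the number of such pairs, and there is no reason elements of distinct minimal annihilators should multiply to zero.
\item On $Z_0$, partitioning by the adjacency pattern to a maximal clique $K$ does not lower the clique number inside a part. Gy\'arf\'as-type arguments recurse into neighbourhoods and need a forbidden induced subgraph to bound the depth, so no depth bound follows.
\item This is not bookkeeping. In the paper's rings $A_B$ every element squares to zero, so $Z_0$ is the whole graph, and there $\chi\ge 2^{\Omega(t^2)}$ while $\omega\le 2^{O(t)}$.
\end{itemize}
With the paper's structure, your split does become harmless. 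If $x^2=0$, then $\bar x^2=0$ in $B\oplus C$, which forces $\bar x\in C$ because $B$ is reduced, so $|Z_0|\le|C|\,|P|$. But that bound comes precisely from the finiteness of $P$ and $C$, which is the missing ingredient.
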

The finite-clique hypothesis is
essential: Hala\v{s} and P\'ocs showed that no analogous statement holds for
infinite cardinals \cite{HP24}. 

We prove \Cref{thm:ring-chi-bounded} in \Cref{sec:uniform-bound}.  The proof
starts from a finite-ideal Ramsey lemma in which the direction of a zero
product is recorded as part of the edge color.  The prime radical $P$ is then
controlled using Bell's theorem that every infinite nil ring contains an
infinite zero subring \cite{Bell01}.  Bell's structure theorem for semiprime
rings with no infinite zero subrings reduces the quotient $R/P$ to a reduced ring
and finitely many full matrix rings over finite fields \cite{Bell01}; the
reduced part is handled using the Andrunakievi\v{c}--Rjabuhin theorem in
Klein's form \cite{Klein80} and the coloring theorem for meet-semilattices
\cite{NWD07}.  

We want to go further and measure the quality of the binding function $f$ in \Cref{thm:ring-chi-bounded}. 
Going along the proof yields a tower-type upper bound of $f$, see~\Cref{cor:asymptotic-upper}.

For lower bounds the classical counterexamples, Anderson--Naseer's ring
has $(\omega,\chi)=(4,5)$, and the direct-product
formulas of \cite{Kav22} give that $f(k)\ge k+1$ for every $k\ge4$.
Dumaldar--Sharma's $p=7$ example has $(\omega,\chi)=(49,51)$~\cite{DS98,TLW20}, and the same product formulas give
$f(k)\ge k+2$ for every $k\ge49$.  
Via symplectic graphs one may obtain 
$f(k)=\Omega(k^2/\log k)$, 
see~\cite{GodsilRoyle01,TW06}. However, using several alternating forms
simultaneously yields a much stronger bound.

\begin{restatable}{thm}{superpolylower}\label{thm:superpoly-lower}
There is an infinite family of finite commutative unital rings $R_t$, with $t\geq 2$, such that 
\[ 
\omega(\Gamma(R_t))\leq 2^{O(t)}\qquad \text{and} \qquad \chi(\Gamma(R_t))\geq 2^{\Omega(t^2)}.\]
\end{restatable}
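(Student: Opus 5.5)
The plan is to take $R_t$ to be a local ring whose maximal ideal squares into a small "value space", with the multiplication encoded by several alternating forms at once. Fix $m=t$ and $n=t^2$, let $V=\F_2^n$ and $U=\F_2^m$, and choose alternating bilinear forms $B_1,\dots,B_m$ on $V$; write $B=(B_1,\dots,B_m)\colon V\times V\to U$. Define $R_t=\F_2\oplus V\oplus U$ with $\F_2$ acting as scalars, $V\cdot V\to U$ given by $B$, and $U\cdot(V\oplus U)=0$. Over $\F_2$ alternating forms are symmetric, so $R_t$ is commutative. It is associative because every triple product of elements of $M=V\oplus U$ vanishes. It is unital and local with maximal ideal $M$, so the vertex set of $\Gamma(R_t)$ is $M\setminus\{0\}$. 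Two vertices $(v,u),(v',u')$ are adjacent iff $B(v,v')=0$. Since $B(v,v)=0$, the graph $\Gamma(R_t)$ is, up to the vertices with $v=0$ (which are universal), the blow-up of the graph $H$ on $V$ with $x\sim y$ iff $B_i(x,y)=0$ for all $i$, each vertex of $H$ replaced by a clique of size $2^m$.

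\textbf{Clique number.} A clique in $H$ spans a common totally isotropic subspace of all the $B_i$, so $\omega(\Gamma(R_t))\le 2^m\cdot 2^{d}$, where $d$ is the largest dimension of such a subspace. I would pick the forms uniformly at random and use a first moment count. For a fixed $d$-dimensional subspace, being totally isotropic for all forms imposes $m\binom d2$ independent linear conditions. There are at most $2^{d(n-d)+O(n)}$ such subspaces. Hence the expected number is at most $2^{dn-m\binom d2+O(n)}$, which is $<1$ once $d\ge 2n/m+O(1)=2t+O(1)$. So some choice of forms gives $d=O(t)$, and therefore $\omega\le 2^{O(t)}$.

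\textbf{Chromatic number.} Use $\chi\ge |V(\Gamma)|/\alpha(\Gamma)$. An independent set of $\Gamma(R_t)$ takes at most one element from each blown-up clique and avoids the universal vertices. So $\alpha(\Gamma)=\alpha(H)$ (ignoring $v=0$), and $\chi\ge 2^{m}(2^n-1)/\alpha(H)$.

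The key step is a polynomial-method bound on $\alpha(H)$ that holds for \emph{every} choice of forms. Let $S$ be independent in $H$, so for $x\ne y$ in $S$ some $B_i(x,y)=1$. Consider the $S\times S$ matrix $A(x,y)=\prod_{i=1}^m(1+B_i(x,y))$ over $\F_2$. It is the identity, since the diagonal entries are $1$ by alternation and the off-diagonal entries are $0$. On the other hand, expanding the product writes $A$ as a sum over $T\subseteq[m]$ of the matrices $\prod_{i\in T}B_i(x,y)$. Each such matrix is an entrywise product of $|T|$ matrices of rank at most $n$, so it has rank at most $n^{|T|}$. Hence
\[
|S|=\operatorname{rank}A\le\sum_{j=0}^m\binom mj n^j=(n+1)^m=2^{O(t\log t)}.
\]
Therefore $\chi(\Gamma(R_t))\ge 2^{t^2-O(t\log t)}=2^{\Omega(t^2)}$.

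The main obstacle I expect is the clique bound. I must check that the conditions are genuinely independent in the random model. This holds because the coefficients $B_i(e_a,e_b)$ for $a<b$ in a basis adapted to the subspace are independent uniform bits. I must also make the Grassmannian count precise and handle small $t$. The rank bound and the blow-up structure are routine.
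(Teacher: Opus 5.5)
Your proposal is correct and follows essentially the same route as the paper. Both use the unitization of the two-step nilpotent algebra $V\oplus\F_2^t$ defined by a random alternating map, a first-moment bound on totally isotropic subspaces for the clique number, and $\chi\ge|V(\Gamma)|/\alpha$ with $\alpha$ controlled through the indicator $\prod_i\bigl(1+B_i(x,y)\bigr)$. The differences are cosmetic. You take $n=t^2$, so isotropic subspaces have dimension about $2t$ rather than at most $t$ as with the paper's $n=\binom{t+1}{2}$. You bound $\alpha$ by ranks of Hadamard products, giving $(n+1)^t$, where the paper counts multilinear monomials of degree at most $t$, giving $\sum_{j\le t}\binom nj$; both are $2^{O(t\log t)}$.
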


In particular, the binding function $f(k)$ is in $2^{\Omega((\log k)^2)}$. Hence, our upper and lower bounds leave a very large gap. We believe that the following would be an interesting measure for currently available tools and understanding:
\begin{ques}
    What is the asymptotic behavior of the binding function $f$ in \Cref{thm:ring-chi-bounded}?
\end{ques}

Finally, let us exploit the impossibility of extension of~\Cref{thm:ring-chi-bounded} to other classes.
The notion of a zero-divisor graph has also been extended to commutative
semigroups~\cite{DMS02,AB17}. There $\chi$-boundedness
cannot hold, since every graph with a dominating vertex is realizable as a
zero-divisor graph \cite[Theorem~3]{DD05}.
Also, zero-divisor
graphs of \emph{semirings} (i.e., both operations are semigroups) have been studied, see ~ \cite{DolzanOblak12}. 

Recall that a graph $G$ is a \emph{retract} of a graph $H$ if there is a graph homomorphism $H\to G$ restricting to the identity on $G$. In this case $\omega(G)=\omega(H)$ and $\chi(G)=\chi(H)$. Thus, with the abundance of graph families of bounded clique number and arbitrarily large chromatic number (see, e.g., \cite{ScottSeymour20,Reiher26}) we can construct many non-$\chi$-bounded families with the following two theorems:
 
\begin{restatable}{thm}{semiring}\label{thm:semiring-retract}
For every finite graph $G$ with a dominating vertex, there exists a finite
commutative unital semiring $R$ such that $G$ is a
retract of $\Gamma(R)$.  In particular, $\omega(G) = \omega(\Gamma(R))$ and $\chi(G) = \chi(\Gamma(R))$ and zero-divisor graphs of finite commutative unital semirings are not $\chi$-bounded.
\end{restatable}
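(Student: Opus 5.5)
We realize $G$ inside a semiring of subsets of a small commutative semigroup. Let $d$ be a dominating vertex of $G$, with $V=V(G)$. Define a commutative semigroup $S=V\cup\{0\}$ by $x\cdot 0=0$, $d\cdot x=0$ for all $x\in V$ (including $x=d$), $xy=0$ for distinct adjacent $x,y$, $xy=d$ for distinct nonadjacent $x,y\ne d$, and $x^2=d$ for $x\ne d$.

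\textbf{The semigroup $S$.} Every product of two elements lies in $\{0,d\}$, and $d$ and $0$ annihilate everything. Hence every product of three elements is $0$, so associativity holds trivially. Let $S^1=S\cup\{1\}$, where $1$ is a formal identity.

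\textbf{The semiring $R$.} Let $R$ be the set of subsets $A\subseteq S^1$ with $0\in A$. Addition is union, $A+B=A\cup B$, with additive identity $\{0\}$. Multiplication is $AB=\{ab:a\in A,\,b\in B\}$, which automatically contains $0$.
\begin{itemize}
\item Multiplication is commutative and associative because $S^1$ is.
\item Distributivity $A(B\cup C)=AB\cup AC$ is immediate.
\item $\{0\}$ is multiplicatively absorbing.
\item $\{0,1\}$ is the multiplicative identity.
\end{itemize}
So $R$ is a finite commutative unital semiring. Moreover $AB=\{0\}$ holds if and only if $ab=0$ for all $a\in A$ and $b\in B$.

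\textbf{The vertices of $\Gamma(R)$.} A set containing $1$ is never a zero-divisor: if $1\in A$ and $AB=\{0\}$, then $B\subseteq\{0\}$. Each set $\{0,x\}$ with $x\in V$ is a zero-divisor, since $\{0,x\}\{0,d\}=\{0\}$. The map $x\mapsto\{0,x\}$ embeds $G$ as an induced subgraph of $\Gamma(R)$: for distinct $x,y$ we have $\{0,x\}\{0,y\}=\{0\}$ if and only if $xy=0$, which holds if and only if $xy\in E(G)$, because $d$ is adjacent to everything.

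\textbf{The retraction.} Fix a linear order on $V$. Define $\rho:V(\Gamma(R))\to V$ by $\rho(\{0,d\})=d$. For any other vertex $A$, let $\rho(A)$ be the largest element of $A\setminus\{0,d\}$. This set is nonempty because $A\not\ni 1$ and $A\ne\{0\},\{0,d\}$. Clearly $\rho$ restricts to the identity on the embedded copy of $G$.

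\textbf{$\rho$ is a homomorphism.} This is the main step. Let $A\ne B$ be adjacent, so $ab=0$ for all $a\in A$ and $b\in B$.
\begin{itemize}
\item Suppose $\rho(A)=\rho(B)=x\ne d$. Then $x\in A\cap B$, so $x^2=0$, contradicting $x^2=d\ne0$.
\item Suppose $\rho(A)=\rho(B)=d$. Then $A=B=\{0,d\}$, contradicting $A\ne B$.
\end{itemize}
Hence $\rho(A)\ne\rho(B)$. Since $\rho(A)\in A$ and $\rho(B)\in B$, we get $\rho(A)\rho(B)=0$, so $\rho(A)$ and $\rho(B)$ are adjacent in $G$.

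The obstacle this construction has to get past is exactly the failure of injectivity on adjacent vertices. It is resolved by forcing $x^2\ne0$ for every $x\ne d$, using $d$ as a "non-zero square" value; the dominating vertex is what makes this consistent with associativity and with $d$ annihilating everything.

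\textbf{Conclusion.} $G$ is a retract of $\Gamma(R)$, so $\omega(G)=\omega(\Gamma(R))$ and $\chi(G)=\chi(\Gamma(R))$. Now take triangle-free graphs of arbitrarily large chromatic number. Adding a dominating vertex to each gives clique number $3$ and still unbounded chromatic number, so zero-divisor graphs of finite commutative unital semirings are not $\chi$-bounded.
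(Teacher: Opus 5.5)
Your proof is correct and is essentially the paper's argument: the same DeMeyer--DeMeyer semigroup (with $x^2=d$ for $x\ne d$, so adjacent vertices get distinct images), a power-set semiring over $S^1$, and a retraction that picks an extremal element of $A\setminus\{0,d\}$. The differences are cosmetic. You verify the semigroup's associativity directly rather than citing it. You use subsets containing $0$ with the full setwise product, which is isomorphic via $A\mapsto A\setminus\{0\}$ to the paper's $\mathcal P(S^1\setminus\{0\})$ with zero products discarded. And you take a maximum instead of a minimum.
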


Finally, also associativity cannot be omitted from~\Cref{thm:ring-chi-bounded}

\begin{restatable}{thm}
{free}\label{prop:free-realization}
For every finite graph $G$ without isolated vertices, there is a commutative nonassociative unital ring $R$ such that $G$ is a {retract} of $\Gamma(R)$.   In particular, $\omega(G) = \omega(\Gamma(R))$ and $\chi(G) = \chi(\Gamma(R))$ and zero-divisor graphs of commutative unital nonassociative rings are not $\chi$-bounded.
\end{restatable}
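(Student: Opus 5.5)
We build $R$ as a "free" commutative nonassociative ring on the vertex set of $G$, impose exactly the relations $xy=0$ for the edges $xy$ of $G$, and then retract $\Gamma(R)$ onto $G$.

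Let $V=V(G)$ and take $R$ to be the free abelian group with basis $\{1\}\cup V\cup M$. Here $M$ is the set of all nonassociative commutative monomials (unordered binary trees) of degree at least $2$ in the letters of $V$, with one exception: a product $x\cdot y$ with $xy\in E(G)$ is not a basis element. Multiplication is defined on basis elements and extended bilinearly. The element $1$ is the unit. For letters $x,y$ we set $x\cdot y=0$ if $xy\in E(G)$, and otherwise $x\cdot y$ is the tree $\{x,y\}$. For general basis elements $u,v$ we let $u\cdot v$ be the tree $\{u,v\}$, which is never of the excluded form unless both are letters. Loops $x\cdot x$ are kept as nonzero basis elements since $G$ is simple. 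This multiplication is commutative, has $1$ as unit, and is in general nonassociative; no associativity has to be checked, which is exactly why the construction works. Equivalently, $R$ is $\mathbb Z\cdot1\oplus N$, where $N$ is the free commutative nonassociative ring on $V$ modulo the ideal generated by the edge products. Since the quotient identifies only basis monomials of degree exactly $2$ with $0$, and products of two nonzero monomials of higher degree are never identified, the monomial basis survives.

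Next we identify the zero-divisors. Grade $R$ by degree, where a letter has degree $1$ and $1$ has degree $0$. Take nonzero $a,b$ with $ab=0$. Compare their lowest-degree components, and suppose first that these do not both have degree $1$. The product of the lowest homogeneous parts $a_0b_0$ is then the lowest part of $ab$. Distinct pairs of basis monomials give distinct trees, and no cancellation occurs except for edge pairs of letters, so $a_0b_0\neq0$ (if $1$ appears in $a_0$, the product is $b_0$ plus other terms). Hence both $a_0$ and $b_0$ lie in degree $1$, and $a_0b_0=0$. Writing $a_0=\sum\alpha_x x$ and $b_0=\sum\beta_y y$, the coefficient of the tree $\{x,y\}$ for a nonedge pair forces $\alpha_x\beta_y+\alpha_y\beta_x=0$ (or $\alpha_x\beta_x=0$ on the diagonal). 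The support pairs must therefore essentially be edges. We claim that if $\alpha_x\ne0$ then $x$ has a neighbour, and we define $\varphi(a)$ to be some vertex in the support of $a_0$. We need $\varphi(a)\varphi(b)\in E(G)$ whenever $ab=0$ with $a\ne b$, and to achieve this we choose $\varphi(a)$ canonically, say the smallest vertex of the support in a fixed order.

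The main obstacle is exactly this step. Vanishing of $a_0b_0$ only gives bilinear conditions, and the minimal vertices of the two supports need not be adjacent: for instance $\alpha_x\beta_y=-\alpha_y\beta_x\neq0$ with $xy$ a nonedge. To avoid this we work over $\mathbb F_2$ or with a positivity trick; the cleanest fix is to replace $\mathbb Z$ by a ring where such cancellations are impossible. Instead we will use the following argument. Let $x$ be minimal in the support of $a_0$ and $y$ minimal in the support of $b_0$. If $x\ne y$ and $xy\notin E$, the coefficient of $\{x,y\}$ is $\alpha_x\beta_y+\alpha_y\beta_x$. For the cancellation to happen we would need $y$ in the support of $a_0$ and $x$ in the support of $b_0$, which forces $x\le y$ and $y\le x$, so $x=y$. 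If $x=y$, the coefficient of $x\cdot x$ is $\alpha_x\beta_x\neq0$, a contradiction. Hence $\varphi(a)\varphi(b)\in E(G)$, so $\varphi$ is a homomorphism $\Gamma(R)\to G$. It restricts to the identity on $V$, and $V$ spans a copy of $G$ inside $\Gamma(R)$ because the edge products vanish and the nonedge products do not.

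Finally, every vertex of $G$ is a zero-divisor precisely because $G$ has no isolated vertices; this is the only place that hypothesis is used. Therefore $G$ is a retract of $\Gamma(R)$, which gives the equalities $\omega(G)=\omega(\Gamma(R))$ and $\chi(G)=\chi(\Gamma(R))$. Taking $G$ from graph families with bounded clique number and unbounded chromatic number then yields the non-$\chi$-boundedness.
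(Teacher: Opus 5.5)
Your argument is correct in substance. You use essentially the paper's ring: the free commutative nonassociative unital algebra on $V$ modulo the edge products. You build it over $\mathbb Z$ with an explicit tree basis, while the paper works over a field $K$. Where you differ is the analysis of zero-divisors.

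The paper first shows that elements with nonzero scalar part are not zero-divisors. It then uses one observation that removes every possible cancellation: if $ab=0$, no tree $s$ lies in both supports, because $s\cdot s$ is a nonzero tree whose coefficient in $ab$ is exactly $\alpha_s\beta_s$. Hence for $s$ in the support of $a$ and $t$ in that of $b$, the coefficient of $st$ is $\alpha_s\beta_t\neq0$, so $st=0$, which forces $s,t$ to be adjacent generators. This describes the zero-divisor graph completely: zero-divisors are exactly the combinations of generators, and $ab=0$ iff every pair of support vertices is an edge. So any choice of a support vertex gives the retraction.

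You instead use the grading to pass to lowest-degree components. This spares you from examining higher-degree terms. But since those terms stay uncontrolled, you need a canonical choice (the minimal vertex of $\operatorname{supp}(a_0)$) together with your ordering trick. In fact the obstacle you identified, $\alpha_x\beta_y=-\alpha_y\beta_x\neq0$, cannot occur once $a_0b_0=0$. It requires $x,y\in\operatorname{supp}(a_0)\cap\operatorname{supp}(b_0)$, which the coefficient $\alpha_x\beta_x$ of $x\cdot x$ rules out. With the paper's observation, every vertex of $\operatorname{supp}(a_0)$ would serve.

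Two points need repair.

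First, your reason for $a_0b_0\neq0$ when the lowest degrees are not both $1$ (``no cancellation occurs except for edge pairs of letters'') is wrong when $a_0,b_0$ have the same degree $d\ge2$. There the coefficient of the tree $\{s,t\}$ is $\alpha_s\beta_t+\alpha_t\beta_s$, which can vanish. The conclusion still holds, but you must apply your minimal-element trick (or the disjoint-support observation) to trees of degree $d$ as well. Only when the degrees differ, or one of them is $0$, is there genuinely no cancellation.

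Second, your ``equivalently'' remark is inaccurate. The ideal generated by the edge products also contains trees such as $(xy)z$ with $xy\in E(G)$, so your set $M$ is not a basis of that quotient. This is harmless: your explicitly defined multiplication already gives a commutative unital ring with the properties you use, and you never invoke the identification. Alternatively, let $M$ consist of the trees in which no internal node has two adjacent letters as children, which is the paper's basis.
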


We do not know whether in \Cref{prop:free-realization} the ring $R$ can be assumed to be finite and we believe that this actually is an interesting question. 

\begin{ques}
Is every finite graph without isolated vertices a retract of the zero-divisor graph of a finite commutative unital nonassociative ring?
\end{ques}

We can however prove probabilistically, that zero-divisor graphs of finite commutative nonassociative rings are not $\chi$-bounded.

\begin{restatable}{thm}{nonassocunbounded}\label{thm:nonassoc-unbounded}
For every $k$ there is a finite commutative nonassociative unital ring $R$ such that $
 \omega(\Gamma(R))\le16$ and $\chi(\Gamma(R))\ge k$.
\end{restatable}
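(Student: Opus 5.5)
The plan is to build $R$ as a unital extension of a random commutative bilinear algebra over $\F_2$, and then use a union bound for the clique number and a Fourier (quasirandomness) estimate for the chromatic number. Let $A=\F_2^n$, fix $m=\lfloor n/c\rfloor$ for a suitable constant $c$ (about $8$), and choose a uniformly random \emph{symmetric} bilinear map $\mu\colon A\times A\to\F_2^m$. Embed $\F_2^m$ into $A$ via the first $m$ coordinates, so that $\mu$ becomes a commutative bilinear product on $A$. Then set $R=\F_2\cdot 1\oplus A$ with
\[
(a+x)(b+y)=ab+(ay+bx+\mu(x,y)).
\]
This is a finite commutative unital ring, and for generic $\mu$ it is nonassociative. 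I will use three facts about $\Gamma(R)$:
\begin{itemize}
\item Two elements with nonzero scalar part multiply to $1+(\cdots)\neq0$. Hence every clique of $\Gamma(R)$ contains at most one such element.
\item It therefore suffices to show that every clique of $\Gamma(R)$ inside $A\setminus\{0\}$ has size at most $15$.
\item For $x,y\in A$ we have $xy=0$ iff $\mu(x,y)=0$. So $\Gamma(R)[A\setminus\{0\}]$ is exactly the \emph{orthogonality graph} of $\mu$, and it suffices to bound the chromatic number of this induced subgraph from below.
\end{itemize}

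\textbf{Clique bound.} Take a clique $x_1,\dots,x_s$ in $A\setminus\{0\}$, and let $x_1,\dots,x_r$ be a maximal linearly independent subfamily. The conditions $\mu(x_i,x_j)=0$ for $i<j\le r$ are independent linear conditions on the random symmetric $\mu$: for independent $x_i$, the values $\mu(x_i,x_j)$ with $i<j$ are jointly uniform. Hence the expected number of such $r$-tuples is at most $2^{nr-m\binom r2}$. This tends to $0$ once $(r-1)m/2>n$, i.e.\ once $r>2c+1$. The remaining clique members are nonzero elements of $\Span(x_1,\dots,x_r)$, so a priori there could be up to $2^r$ of them, which is far too many.

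To control this, I refine the count. I enumerate the whole configuration (a set $S$ of $s$ vectors of rank $r$), and record that each pair of distinct members of $S$ imposes a constraint. These constraints are linear in the Gram data $(\mu(x_i,x_j))_{i\le j\le r}$, and their span has dimension growing with $s$. I then choose $c$ (and possibly work over a larger prime field $\F_p$ instead of $\F_2$) so that, with high probability, every configuration with $s\ge16$ fails. This parameter bookkeeping is routine but fiddly, and it is where the constant $16$ gets fixed.

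\textbf{Chromatic bound.} This is the main step: I will show that with high probability every set $S\subseteq A$ with $|S|\ge 2^n/k$ spans an edge, which gives $\chi\ge k$. Expanding the indicator of $\mu(x,y)=0$ in characters gives
\[
\#\{(x,y)\in S^2:\mu(x,y)=0\}=2^{-m}\sum_{\lambda\in(\F_2^m)^*}\ \sum_{x,y\in S}(-1)^{\lambda\circ\mu(x,y)}.
\]
For each $\lambda\ne0$, the map $\lambda\circ\mu$ is a symmetric bilinear form on $A$. By Cauchy--Schwarz, the inner double sum is bounded by $|S|\,2^{n}\,2^{-\operatorname{rank}(\lambda\circ\mu)/2}$. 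A union bound over the $2^m$ choices of $\lambda$ shows that with high probability every $\lambda\circ\mu$ with $\lambda\ne0$ has rank at least $n-O(m)$. Consequently the $\lambda=0$ term, of size $2^{-m}|S|^2$, dominates the error once $|S|\ge 2^n/k$ and $n$ is large.

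Two further points remain in this step. The diagonal pairs $x=y$ number at most $|S|$, which is negligible. Pairs involving $0$ are also negligible. So $S$ contains two distinct orthogonal elements, as required.

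I expect the main obstacle to be the clique bound: handling linearly dependent clique members so that the final constant is exactly $16$. If the $\F_2$ bookkeeping does not close, my fallback is to pass to $\F_p$ with $p$ large and to use that lines through a nonzero $x$ contribute at most one isotropic direction.
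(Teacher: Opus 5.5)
\textbf{The gap is the clique bound, and with your parameters it is false.} Your chromatic step is sound: each $\lambda\circ\mu$ with $\lambda\neq0$ is a uniform symmetric form, a union bound over the $2^m$ choices of $\lambda$ keeps all ranks close to $n$, and the main term wins provided $m<n/2$ with some room. The clique bound, which you defer as ``routine but fiddly'', is where the proof breaks.

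Over $\F_2$ the map $x\mapsto\mu(x,x)$ is additive because $\mu$ is symmetric. So if $x_1,\dots,x_j$ are pairwise orthogonal with $\mu(x_i,x_i)=0$, then $\{x:\mu(x,x)=0,\ \mu(x,x_i)=0\text{ for all }i\le j\}$ is a subspace of codimension at most $(j+1)m$ containing $\Span(x_1,\dots,x_j)$. You can therefore pick $x_{j+1}$ outside this span whenever $n-(j+1)m>j$. With $m\approx n/8$ and $n$ large this produces, for \emph{every} $\mu$, a $7$-dimensional subspace on which $\mu$ vanishes identically, i.e.\ a clique of size $127$.

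Your fallback makes things worse. Over $\F_p$ with $p$ odd, any $x\neq0$ with $\mu(x,x)=0$ gives the clique $\{tx:t\in\F_p^*\}$ of size $p-1$. Such an $x$ exists by Chevalley--Warning once $n>2m$, which your chromatic step requires anyway.

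This is a real tension, not bookkeeping. Your character-sum estimate is useless once $m\ge n/2$, because the error bound $|S|2^{n-\rho/2}$ is then at least $2^{-m}|S|^2$. On the other side, a totally isotropic $4$-space plus one vector orthogonal to it is a $16$-clique of rank $5$ imposing only $14$ independent constraints. Your union bound cannot exclude it unless $14m>5n$. A repair along your lines would need $m$ close to $n/2$ (for instance $5n/11<m<n/2$), together with an actual lemma: every $16$-element configuration $C$ of rank $r$ imposes more than $rn/m$ independent constraints on $\mu|_{\Span(C)}$. For $r\ge6$ the $\binom r2$ products of a basis taken inside $C$ suffice. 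For $r=5$, a triple $x,y,x+y\in C$ forces $\mu(x,x)=0$ as an eleventh constraint, and a $16$-subset of $\F_2^5\setminus\{0\}$ without such a triple is a hyperplane complement and forces all $15$. None of this is in the proposal.

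The paper avoids the issue altogether. Its product $xy=T(\pi(x\wedge y))$ is \emph{alternating}, so $x^2=0$ and every clique spans a totally isotropic subspace. The random $T:Q\to V$ takes values in all of $V$, so each constraint costs $n$ bits. Large chromatic number comes not from pseudorandomness but from planting a graph $G$ of girth at least $6$ and $\chi(G)\ge k$ on the basis vectors. The girth ensures these forced zeros kill at most $4$ of the $10$ wedge directions of any $5$-space, so $\Pr[U^2=0]\le2^{-6n}$ beats the fewer than $2^{5n}$ choices of $U$.
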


\section{A uniform bound for rings}
\label{sec:uniform-bound}

In this section we prove \Cref{thm:ring-chi-bounded}. Before that, we first isolate the additive
observation which will be used in the Ramsey argument.

\begin{lem}\label{lem:rainbow-differences}
Let $A$ be an abelian group, let $s\ge1$, and let
$X_1,\ldots,X_s\subseteq A$ satisfy $|X_i|\geq 2i$ for every $i$. Then one can
choose distinct $x_i,y_i\in X_i$, $i\in[s]$, such that the differences
$d_i=x_i-y_i$ are pairwise distinct up to sign; that is,
$d_i\notin\{d_j,-d_j\}$ whenever $i\ne j$.

\end{lem}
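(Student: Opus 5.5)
The plan is a greedy choice, processing the sets in the order $X_1,X_2,\ldots,X_s$. The point of the hypothesis $|X_i|\ge 2i$ is that the $i$-th set has more elements than there are differences to avoid at that stage.

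\textbf{Step 1 (forbidden set).} Suppose $x_j,y_j\in X_j$ with $x_j\neq y_j$ have already been chosen for all $j<i$. Their differences $d_j=x_j-y_j$ are pairwise distinct up to sign. Put
\[
F_i=\{d_j,-d_j : j<i\}\subseteq A,
\]
so that $|F_i|\le 2(i-1)$. It suffices to find distinct $x_i,y_i\in X_i$ with $x_i-y_i\notin F_i$. Indeed, the condition $d_i\notin\{d_j,-d_j\}$ is symmetric in $i$ and $j$, because $d_i=\pm d_j$ holds if and only if $d_j=\pm d_i$. So the requirement for every pair $j<i$ is checked exactly when the later index $i$ is handled.

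\textbf{Step 2 (pigeonhole).} Fix any element $x_i\in X_i$. As $y$ ranges over $X_i\setminus\{x_i\}$, the differences $x_i-y$ are pairwise distinct, since translation by $x_i$ and negation are injective in $A$. They are also nonzero. This gives at least $|X_i|-1\ge 2i-1>2(i-1)\ge|F_i|$ distinct values. Hence at least one such difference lies outside $F_i$. Choose $y_i$ to be an element attaining it. Then $x_i\neq y_i$ and $d_i\notin F_i$, which completes the induction.

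\textbf{Main obstacle.} I do not expect a real obstacle; the only thing to check is the counting in Step 2. One might worry about elements of order $2$, where $d_j=-d_j$. In that case $F_i$ only becomes smaller, so the bound $|F_i|\le 2(i-1)$ still holds. Likewise, $d_i$ itself may have order $2$, and this does not matter because only pairs with $i\neq j$ are constrained. So the proof is this one-line pigeonhole at each step of the induction.
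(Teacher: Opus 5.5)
Your proof is correct and is essentially the same as the paper's greedy argument. The paper phrases your pigeonhole step as a graph on $X_{j+1}$ joining $x,y$ when $x-y\in\{\pm d_1,\dots,\pm d_j\}$. Every vertex there has degree at most $2j<|X_{j+1}|-1$, so each $x$ has a partner $y$ whose difference is new up to sign.
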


\begin{proof}
Choose the pairs successively. Suppose that we have already chosen the pairs corresponding to $X_1,\ldots,X_j$, where $j<s$, and consider $X_{j+1}$. Let us construct the graph with vertex set $X_{j+1}$ in which two vertices $x$ and $y$ are adjacent if $x-y\in\{\pm d_1,\dots,\pm d_j\}$, which implies that every vertex has degree at most $2j< |X_{j+1}|-1$. Hence for every $x\in X_{j+1}$ there is $y\in X_{j+1}$ such that $x-y\notin\{\pm d_1,\dots,\pm d_j\}$, and this provides a pair in $X_{j+1}$ with a new difference class up to sign. Continuing greedily proves the claim. 
\end{proof}

For two positive integers $q,s$, let $\mathcal R_q(s)$ denote the
\emph{multicolor Ramsey number}, i.e., the least integer $N$ such that every
$q$-edge-coloring of $K_N$ contains a monochromatic $K_s$. 

Given a ring $S$, a \textit{left} (resp. \textit{right}) \textit{ideal} of $S$ is an additive subgroup of $S$ which is closed under left (resp. right) multiplication by elements of $S$, while an ideal satisfying the two conditions is said to be a \textit{two-sided ideal}. Notice that in the commutative setting every ideal is a two-sided ideal. If $I$ is a left (resp. right) ideal, then the quotient $S/I$ is a left (resp. right) $S$-module. In this work we will consider two-sided ideals $I$, which satisfy that $S/I$ is a ring.

\begin{lem}\label{lem:finite-ideal-quotient}
Let $R$ be a ring and let $P\subseteq R$ be a finite two-sided ideal of size 
$m$. If $\omega(\Gamma(R))\le k$, then
\[
    \omega(\Gamma(R/P))
    \le
    \mathcal R_{2m}\bigl((k+1)(k+2)\bigr)-1.
\]

\end{lem}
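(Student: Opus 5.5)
We argue by contradiction: assume $\Gamma(R/P)$ contains a clique of size $N=\mathcal R_{2m}\bigl((k+1)(k+2)\bigr)$. From it we will extract a clique of size $k+1$ in $\Gamma(R)$, contradicting $\omega(\Gamma(R))\le k$. The idea is to pass to a monochromatic sub-clique on which all relevant products in $R$ equal a single element of $P$. Suitable differences then multiply to exactly zero.

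\textbf{Setup and Ramsey step.} Fix a clique $\bar a_1,\dots,\bar a_N$ in $\Gamma(R/P)$ and choose lifts $a_1,\dots,a_N\in R$. These lifts are pairwise distinct and even pairwise incongruent modulo $P$. For $i<j$, adjacency in $\Gamma(R/P)$ means $a_ia_j\in P$ or $a_ja_i\in P$. We colour the edge $\{i,j\}$, $i<j$, by a pair $(\varepsilon,p)$:
\begin{itemize}
\item if $a_ia_j\in P$, set $\varepsilon=\to$ and $p=a_ia_j$;
\item otherwise set $\varepsilon=\leftarrow$ and $p=a_ja_i$.
\end{itemize}
This uses at most $2m$ colours. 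By the definition of $\mathcal R_{2m}$ there is a monochromatic index set of size $s=(k+1)(k+2)$. Relabelling (and reversing the order of the indices if $\varepsilon=\leftarrow$), we may assume there is a fixed $p\in P$ with $a_ia_j=p$ for all $i<j$ in this index set.

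\textbf{Block differences.} Since $\sum_{t=1}^{k+1}2t=(k+1)(k+2)$, we split the monochromatic indices into consecutive blocks $B_1<B_2<\dots<B_{k+1}$ with $|B_t|=2t$. Let $X_t=\{a_i: i\in B_t\}\subseteq(R,+)$; these sets have the right sizes because the $a_i$ are distinct. By \Cref{lem:rainbow-differences} we pick distinct $x_t,y_t\in X_t$ such that the differences $d_t=x_t-y_t$ are pairwise distinct (even up to sign). Each $d_t$ is nonzero, since $x_t\neq y_t$.

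\textbf{Key computation.} Take $t<u$. Every index appearing in $x_t,y_t$ lies in $B_t$, and every index in $x_u,y_u$ lies in $B_u$, so it is strictly smaller. Expanding,
\[
d_td_u=x_tx_u-x_ty_u-y_tx_u+y_ty_u=p-p-p+p=0 .
\]
Hence $d_1,\dots,d_{k+1}$ are distinct nonzero elements of $R$. Each is a one-sided zero-divisor, since $k+1\ge2$ provides a nonzero partner. They are pairwise adjacent in $\Gamma(R)$, so they form a clique of size $k+1$, which is the desired contradiction.

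\textbf{Main obstacle.} The delicate point is that only products taken in the ``increasing'' order are controlled by the colouring; $a_ja_i$ for $i<j$ is unknown. This is why the direction must be recorded in the colour, and why the differences are taken inside consecutive blocks rather than arbitrarily: the block structure guarantees that every cross product $d_td_u$ with $t<u$ expands only into increasing-order terms. The other point needing care is that distinctness of the $d_t$ is not automatic, and this is exactly what \Cref{lem:rainbow-differences} supplies with the block sizes $2t$.
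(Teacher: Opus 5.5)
Your proof is correct and follows the paper's argument essentially step for step. It uses the same direction-and-value edge colouring with at most $2m$ colours, the Ramsey step, consecutive blocks of sizes $2t$, \Cref{lem:rainbow-differences}, and the cancellation $p-p-p+p=0$; the only cosmetic difference is that you reverse the index order to reduce the $\leftarrow$ case to the $\to$ case, where the paper treats the two cases separately.
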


\begin{proof}
Put $N=\mathcal R_{2m}\bigl((k+1)(k+2)\bigr)$ and suppose, for a contradiction,
that $\Gamma(R/P)$ contains a clique
$\{\overline{x_1},\ldots,\overline{x_N}\}$. Choose representatives
$x_1,\ldots,x_N\in R$. For every $i<j$, since $\overline{x_i}$ and $\overline{x_j}$ are
adjacent in $\Gamma(R/P)$, at least one of $x_ix_j,x_jx_i$ belongs to $I$.
Color the edge $\{\overline{x_i},\overline{x_j}\}$ by $(\to,x_ix_j)$ if $x_ix_j\in P$, and otherwise
by $(\leftarrow,x_jx_i)$. There are at most $2m$ colors.

By the definition of $\mathcal R_{2m}\bigl((k+1)(k+2)\bigr)$, there is a set
of $(k+1)(k+2)$ indices on which all edges have the same color. Keep these
indices in their inherited order and partition them into $k+1$ consecutive
blocks $X_1,\dots,X_{k+1}$, with $|X_i|=2i$ for every $i\in\{1,\dots,k+1\}$. Applying
\Cref{lem:rainbow-differences} to the sets of representatives belonging to
these blocks, we may choose two representatives
$a_i,b_i$ from the $i$th block such that $d_i=a_i-b_i$, for $i\in[k+1]$, are pairwise distinct up
to sign. Since $a_i$ and $b_i$ represent distinct cosets modulo $P$, every
$d_i$ is nonzero.

Suppose first that the common Ramsey color is $(\to,c)$ for some $c\in P$.
If $i<j$, every index in the $i$th block precedes every index in the $j$th
block, and hence $a_ia_j=a_ib_j=b_ia_j=b_ib_j=c$. Therefore
\[
    d_id_j=(a_i-b_i)(a_j-b_j)=c-c-c+c=0.
\]
Thus $d_id_j=0$ whenever $i<j$.

If instead the common color is $(\leftarrow,c)$, the same argument gives
$d_jd_i=0$ whenever $i<j$. In either case, the $k+1$ distinct nonzero
elements $d_1,\ldots,d_{k+1}$ form a clique in $\Gamma(R)$, contradicting
$\omega(\Gamma(R))\le k$.
\end{proof}

A ring $P$ is \emph{nilpotent} if $P^t=0$ for some $t\ge1$; the least such
$t$ is called its \emph{nilpotency index}. Much stronger, $P$ is a \emph{zero ring}
if its multiplication is identically zero, i.e., its nipotency index is at most $2$.

\begin{lem}\label{lem:nilpotent-order}
Let $k,t\ge1$, and define $k_0=k$ and
$k_{i+1}=\mathcal R_{2(k_i+1)}\bigl((k_i+1)(k_i+2)\bigr)-1$ for $i\ge0$.
If $P$ is a ring satisfying $P^t=0$ and $\omega(\Gamma(P))\le k$, then
\[
    |P|\le\prod_{i=0}^{t-2}(k_i+1),
\]
where the empty product is $1$.
\end{lem}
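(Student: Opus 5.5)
We argue by induction on $t$, using the filtration $P\supseteq P^2\supseteq\cdots\supseteq P^{t-1}\supseteq P^t=0$. Each power $P^j$ is a two-sided ideal of $P$, and each quotient $P/P^{j}$ is a ring whose $j$th power vanishes. The base cases are $t=1$, where $P=0$ and $|P|=1$ is the empty product, and $t=2$, where $P$ is a zero ring. In a zero ring every nonzero element is a zero-divisor and any two nonzero elements are adjacent, so $\Gamma(P)$ is complete on $|P|-1$ vertices. Hence $|P|-1\le k=k_0$, which gives $|P|\le k_0+1$ as required.

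For the inductive step with $t\ge3$, set $I=P^{t-1}$. Then $I\cdot P\subseteq P^t=0$ and $P\cdot I=0$, so in particular $I^2=0$. Thus $I$ is a zero ring, and $\Gamma(I)$ is an induced subgraph of $\Gamma(P)$: every nonzero element of $I$ is a zero-divisor of $P$, and any two of them multiply to $0$. By the base-case argument, $|I|\le k+1=k_0+1$.

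Next we apply \Cref{lem:finite-ideal-quotient} to the finite two-sided ideal $I$ of size $m=|I|\le k+1$. It gives
\[
\omega(\Gamma(P/I))\le \mathcal R_{2m}\bigl((k+1)(k+2)\bigr)-1\le \mathcal R_{2(k+1)}\bigl((k+1)(k+2)\bigr)-1=k_1,
\]
where the second inequality uses that Ramsey numbers are monotone in the number of colors. The quotient $P/I$ satisfies $(P/I)^{t-1}=0$. Applying the induction hypothesis to $P/I$ with the parameters $k_1$ and $t-1$ yields
\[
|P/I|\le\prod_{i=0}^{t-3}(k'_i+1),
\]
where the sequence $k'_i$ starts from $k'_0=k_1$. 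This shifted sequence satisfies $k'_i=k_{i+1}$, so the bound reads $|P/I|\le\prod_{i=1}^{t-2}(k_i+1)$. Multiplying by $|I|\le k_0+1$ gives $|P|=|I|\,|P/I|\le\prod_{i=0}^{t-2}(k_i+1)$.

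The argument depends on the induction hypothesis being stated for arbitrary starting values $k$, so that it can be applied to the shifted sequence. It also depends on the recursion defining $k_i$ being monotone. This holds because Ramsey numbers increase in both the number of colors and the clique size, so a weaker clique bound never breaks the chain.

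If we wanted to avoid the shifted sequence, we could instead peel off $P/P^2$ first. That route is less convenient, because bounding $P^2$ would require its clique number to be controlled in a way that does not pass to the quotient. Peeling off the top power $P^{t-1}$, which annihilates everything, is the natural choice, since the finite-ideal lemma then transfers the clique bound to $P/P^{t-1}$.

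The most delicate point is the degenerate case in which $P/I$ has no zero-divisors, or in which $\omega(\Gamma(P/I))$ is $0$. This is harmless. Since $t-1\ge2$, every nonzero element of the nilpotent ring $P/I$ is a zero-divisor. The clique bound in the lemma still holds trivially when the graph is empty.
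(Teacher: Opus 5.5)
Your proof is correct and matches the paper's argument: induction on $t$, peel off a two-sided ideal annihilated by $P$ on both sides (hence a zero ring of size at most $k+1$), transfer the clique bound to the quotient via \Cref{lem:finite-ideal-quotient} together with monotonicity of $\mathcal R_q$ in $q$, and apply the induction hypothesis with $k_1$ in place of $k$. The only differences are cosmetic: you take $I=P^{t-1}$ where the paper takes the full two-sided annihilator $A=\{a\in P: aP=Pa=0\}\supseteq P^{t-1}$, and you handle $t=2$ as a separate base case, which the paper absorbs into the inductive step.
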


\begin{proof}
We argue by induction on $t$. For $t=1$ we have $P=0$, and the result is
immediate. Suppose $t\ge2$, and let
$A=\{a\in P:aP=Pa=0\}$. Then $A$ is a two-sided ideal and a zero subring of $P$, so its nonzero
elements constitute a clique in $\Gamma(P)$ and $|A|\le k+1=k_0+1$. Moreover,
$P^{t-1}\subseteq A$, and hence $(P/A)^{t-1}=0$. By
\Cref{lem:finite-ideal-quotient}, 
\[
    \omega(\Gamma(P/A))
    \le
    \mathcal R_{2|A|}\bigl((k+1)(k+2)\bigr)-1
    \le
    \mathcal R_{2(k+1)}\bigl((k+1)(k+2)\bigr)-1
    =k_1.
\]
Applying the induction hypothesis to $P/A$, with $k_1$ in place of $k$,
gives $|P/A|\le\prod_{i=1}^{t-2}(k_i+1)$. Consequently,
\[
    |P|
    =|A|\,|P/A|
    \le
    (k_0+1)\prod_{i=1}^{t-2}(k_i+1)
    =
    \prod_{i=0}^{t-2}(k_i+1).
\]\end{proof}

We now prove Beck's conjecture for reduced rings. Recall that a ring is said to be \emph{reduced} if it has no nonzero nilpotent elements, and it is \emph{entire} if $xy=0$ implies $x=0$ or $y=0$.
\begin{lem}\label{lem:reduced-ring}
If $B$ is a reduced ring, then
\[
    \chi(\Gamma(B))=\omega(\Gamma(B)).
\]
\end{lem}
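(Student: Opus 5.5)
The plan is to realize $\Gamma(B)$ as an independent-set blow-up of the zero-divisor graph of a meet-semilattice with $0$, and then to invoke the coloring theorem for meet-semilattices \cite{NWD07}. Blow-ups of this kind do not change $\chi$ or $\omega$.

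First I would record the symmetry of zero products in a reduced ring. If $xy=0$, then $(yx)^2=y(xy)x=0$, and since $B$ is reduced this forces $yx=0$. Next I would apply the Andrunakievi\v{c}--Rjabuhin theorem in Klein's form \cite{Klein80}. It says that a reduced ring is a subdirect product of entire rings, that is, $\bigcap_{Q\in\mathcal Q}Q=0$ for a family $\mathcal Q$ of completely prime ideals $Q$; here completely prime means $B/Q$ is entire. For $x\in B$ define its support $S(x)=\{Q\in\mathcal Q: x\notin Q\}$. Because each $Q$ is completely prime, $xy\in Q$ holds if and only if $x\in Q$ or $y\in Q$. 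Hence $S(xy)=S(x)\cap S(y)$. Moreover $S(x)=\emptyset$ if and only if $x=0$, because the intersection of $\mathcal Q$ is zero.

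It follows that $L=\{S(x):x\in B\}$ is closed under intersection, so it is a meet-semilattice with least element $0_L=\emptyset=S(0)$. For nonzero $x,y$ we get $xy=0 \iff S(x)\cap S(y)=\emptyset$. Consequently the map $x\mapsto S(x)$ sends vertices of $\Gamma(B)$ to vertices of the zero-divisor graph $\Gamma(L)$, in which $a\sim b$ iff $a\wedge b=0_L$, and it preserves adjacency and non-adjacency. Two vertices with the same support $S(x)=S(y)\neq\emptyset$ are non-adjacent, since $S(xy)=S(x)\neq\emptyset$. Every vertex of $\Gamma(L)$ is hit, because if $S(x)\cap S(y)=\emptyset$ with both supports nonempty, then $xy=0$ and $x,y$ are zero-divisors. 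So $\Gamma(B)$ is obtained from $\Gamma(L)$ by replacing each vertex with an independent set, with all edges between classes whose images are adjacent.

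For such a blow-up, a clique meets each class at most once, so $\omega(\Gamma(B))=\omega(\Gamma(L))$. A coloring of $\Gamma(L)$ pulls back to a proper coloring of $\Gamma(B)$, so $\chi(\Gamma(B))\le\chi(\Gamma(L))$. Beck's conjecture for meet-semilattices with $0$ \cite{NWD07} gives $\chi(\Gamma(L))=\omega(\Gamma(L))$. Together with the trivial inequality $\omega\le\chi$ this yields $\chi(\Gamma(B))=\omega(\Gamma(B))$. If $\omega(\Gamma(B))$ is infinite I would either note that the semilattice theorem covers that case or simply restrict the statement to finite clique number, which is all that is needed later.

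The main obstacle I expect is the non-commutative, non-unital setting. One has to make sure that Klein's form of the theorem really provides completely prime ideals with zero intersection without assuming a unit. One also has to check that the hypotheses of \cite{NWD07} (a meet-semilattice with $0$, possibly infinite) match $L$ exactly. The remaining steps are routine verifications.
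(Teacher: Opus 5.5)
Your proposal is correct and follows essentially the same route as the paper: Klein's form of the Andrunakievi\v{c}--Rjabuhin theorem, supports that multiply to their intersection and so form a meet-semilattice $L$, $\Gamma(B)$ as an independent-set blow-up of $\Gamma(L)$, and then \cite[Theorem 1]{NWD07}. Your completely-prime-ideal supports are just the kernel-side version of the paper's coordinate supports $\sigma(x)$. Your caveat about infinite clique number is a fair point that the paper's proof does not address; only the finite case is used later.
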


\begin{proof}
The theorem of Andrunakievi\v{c}--Rjabuhin
\cite{AndrunakievicRjabuhin68}, in the form proved by Klein \cite{Klein80}, says that every reduced ring $B$ is
a subdirect product of entire rings. More precisely, there is a family
of entire rings $\{D_\lambda\}_{\lambda\in\Lambda}$ and an injective ring
homomorphism
\[
    \varphi:B\hookrightarrow\prod_{\lambda\in\Lambda}D_\lambda
\]
such that every coordinate projection
$\pi_\lambda\circ\varphi:B\to D_\lambda$ is surjective. Thus we may identify
$B$ with a subring of the direct product and write
$x=(x_\lambda)_{\lambda\in\Lambda}\in B$. Since multiplication in the direct
product is coordinatewise and every $D_\lambda$ is entire, if
$\sigma(x)=\{\lambda\in\Lambda:x_\lambda\ne0\}$, then
$\sigma(xy)=\sigma(yx)=\sigma(x)\cap\sigma(y)$.
Hence, $L=\{\sigma(x):x\in B\}$ is a
meet-semilattice under intersection. Moreover, two nonzero elements of $B$
have a zero product in either order exactly when their supports are disjoint. The zero-divisor graph of the meet-semilattice $L$, which has a minimum $\hat{0}$ because of being finite ($\emptyset$ in our case), is defined as the graph on $L$ in which two vertices $x$ and $y$ are adjacent if and only if $x\wedge y=\hat{0}$.
Thus the zero-divisor graph of $B$ is obtained from the zero-divisor graph
of $L$ by replacing each vertex by the nonempty independent set consisting
of the elements with that support. Replacing vertices by nonempty
independent sets preserves both clique and chromatic number, and the coloring
theorem for meet-semilattices \cite[Theorem 1]{NWD07} therefore gives
$\chi(\Gamma(B))=\omega(\Gamma(B))$.
\end{proof}

Here and below, $\oplus$ denotes a
direct sum. Thus, ring operations are componentwise.

\begin{lem}\label{lem:matrix-sum}
Let
\[
    C=\bigoplus_{i=1}^r M_{n_i}(\mathbb F_{q_i}),
    \qquad n_i\ge2,
\]
be a finite direct sum of full matrix rings. If
$\ell=\omega(\Gamma(C))$, then
\[
    r\le\frac{\ell}{3}
    \qquad\text{and}\qquad
    |C|\le\ell^{\frac{5\ell}{3}}.
\]
In particular, $\chi(\Gamma(C))\le\ell^{\frac{5\ell}{3}}$.
\end{lem}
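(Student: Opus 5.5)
We will bound $\omega$ of each summand from below and use that cliques in different summands combine. First, for a single matrix ring $M_n(\mathbb F_q)$ with $n\ge2$, we exhibit a clique of size at least $q^{\lceil n/2\rceil\lfloor n/2\rfloor}-1\ge q^{n^2/4}-1$. Split the indices into $A=\{1,\dots,\lceil n/2\rceil\}$ and $B$ its complement. The nonzero matrices supported on rows $A$ and columns $B$ square to zero and multiply to zero pairwise, since $E_{ab}E_{a'b'}=0$ whenever $b\in B$ and $a'\in A$. For $n=2$ this gives only $q-1$ vertices, so we will also want a clique of size at least $3$ in each summand. For this we use the $q+1\ge3$ rank-one idempotent-type elements, or more simply $E_{12},E_{21}$-style elements: $\{E_{12}, E_{11}-E_{21}\ldots\}$ need checking. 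The cleanest choice is the $q+1$ nilpotent lines: for $n=2$ the nonzero nilpotents in a fixed line together with $E_{11}$-type annihilators. Alternatively we take $x=E_{12}$, $y=E_{22}$ and $z=E_{11}$. Then $xy\ne0$ but $yx=0$, $zy=yz=0$, and $xz=0$, so these form a clique of size $3$. The directed "either order" convention is essential here. This construction works for every $n\ge2$ (embed in the top-left corner), so $\omega(\Gamma(M_{n}(\mathbb F_q)))\ge\max\{3,q^{n^2/4}-1\}$.

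Next, a clique $K_i$ in the $i$th summand embeds into $C$ by placing it in coordinate $i$ and zeros elsewhere. Elements from different summands multiply to zero in both orders, so the union of the $K_i$ is a clique of $\Gamma(C)$ of size $\sum_i|K_i|$. This gives $\ell\ge 3r$, which is the first claim. It also gives $\ell\ge\sum_i\max\{3,q_i^{n_i^2/4}-1\}$.

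For the cardinality bound we need $|C|=\prod_i q_i^{n_i^2}$. Write $m_i=\max\{3,q_i^{n_i^2/4}-1\}$. Then $q_i^{n_i^2}\le(m_i+1)^4$. We want $\prod_i(m_i+1)^4\le\ell^{5\ell/3}$ given $\sum m_i\le\ell$ and $m_i\ge3$. The main obstacle is this elementary optimization. Since $\log(m+1)^4/m$ is maximized at small $m$, the worst case is all $m_i=3$. That case gives $4^{4r}$ with $r\le\ell/3$, i.e.\ $4^{4\ell/3}=2^{8\ell/3}$. This is at most $\ell^{5\ell/3}$ once $\ell\ge3$ (since $3^{5/3}>2^{8/5}$). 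In general we bound each factor as $(m_i+1)^4\le \ell^{5m_i/3}$, using $m_i\le\ell$ and $m_i\ge3$: indeed $4\log(m+1)\le\frac53 m\log 3\le\frac53 m\log\ell$ for $m\ge3$. Taking the product then gives $|C|\le\ell^{5\ell/3}$.

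Finally, coloring each vertex by itself shows $\chi(\Gamma(C))\le|C|$, which gives the last assertion.
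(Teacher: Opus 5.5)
Your argument for $r\le\ell/3$ is correct and matches the paper's: the triple $E_{11},E_{22},E_{12}$ in each summand, with cliques from different summands combining. The cardinality bound, however, contains two genuine errors.

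\emph{First error: odd $n$.} The block construction gives a zero subring of dimension $\lceil n/2\rceil\lfloor n/2\rfloor=\lfloor n^2/4\rfloor$. For odd $n$ this is strictly less than $n^2/4$, so $q^{\lceil n/2\rceil\lfloor n/2\rfloor}-1\ge q^{n^2/4}-1$ is false. Hence your lower bound $\ell\ge\sum_i\max\{3,q_i^{n_i^2/4}-1\}$ is not justified. If you use what the construction actually provides, $m=\max\{3,q^{\lfloor n^2/4\rfloor}-1\}$, then the step $q^{n^2}\le(m+1)^4$ fails. For $M_3(\mathbb F_q)$ you only get $(m+1)^4=q^8<q^9=|M_3(\mathbb F_q)|$. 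With cliques of size about $q^{\lfloor n^2/4\rfloor}$, the exponent must be at least $n^2/\lfloor n^2/4\rfloor$, which is $9/2$ for $n=3$. This is why the paper uses $n^2\le 5\lfloor n^2/4\rfloor$.

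\emph{Second error: the boundary case $\ell=3$.} The inequality $(m+1)^4\le\ell^{5m/3}$ is false at $m=\ell=3$, since $4^4=256>243=3^5$ (equivalently $4\log 4>5\log 3$). Likewise, your worst-case estimate $2^{8\ell/3}\le\ell^{5\ell/3}$ needs $\ell\ge 2^{8/5}\approx 3.03$, so it also fails at $\ell=3$; the comparison $3^{5/3}>2^{8/5}$ is not the relevant one. This case really occurs: $C=M_2(\mathbb F_2)$ has $\omega(\Gamma(C))=3$, and your chain would then require $256\le 243$.

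\emph{How to repair it.} Both problems disappear with the paper's choices. Add $E_{11}$ to the block zero subring. Every $Y$ supported on rows $A$ and columns $B$ satisfies $YE_{11}=0$ because $1\notin B$, and $E_{11}$ is not in the block. So each factor contains a clique with $q_i^{\lfloor n_i^2/4\rfloor}$ elements, giving $q_i^{\lfloor n_i^2/4\rfloor}\le\ell$. Then $|M_{n_i}(\mathbb F_{q_i})|=q_i^{n_i^2}\le q_i^{5\lfloor n_i^2/4\rfloor}\le\ell^5$. Combined with the first part, $|C|\le\ell^{5r}\le\ell^{5\ell/3}$, and no optimization over the $m_i$ is needed. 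If you prefer to keep your additive bookkeeping, take $m_i=\max\{3,q_i^{\lfloor n_i^2/4\rfloor}\}$. Then $\sum_i m_i\le\ell$ and $q_i^{n_i^2}\le m_i^5\le\ell^5\le\ell^{5m_i/3}$, and the product again gives $\ell^{5\ell/3}$.
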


\begin{proof}

For each $i$, choose the three matrices $X_{i,1}, X_{i,2}, X_{i,3}\in M_{n_i}(\F_{q_i})$ satisfying that the only nonzero entry of $X_{i,j}$ equals $1\in \F_{q_i}$ and belongs to the position $(1,1), (2,2)$ and $(1,2)$, respectively.

The elements $X_{1,1},\ldots,X_{r,3}$ are pairwise
annihilating, so they form a clique in $\Gamma(C)$ and hence $\ell\geq 3r$, which implies $r\le\frac{\ell}{3}$. 

Fix a factor $M_n(\mathbb F_q)$. Following the idea of ~\cite[Theorem 8]{AM07}, we choose a decomposition
$\mathbb F_q^n=U\oplus W$ with $\dim U=\lfloor n/2\rfloor$ and
$\dim W=\lceil n/2\rceil$. Relative to this decomposition, the matrices
\[
    Z=
    \left\{
    \begin{pmatrix}
        0&A\\
        0&0
    \end{pmatrix}
    :
    A\in\operatorname{Hom}_{\mathbb F_q}(W,U)
    \right\}
\]
form a zero subring of $M_n(\mathbb F_q)$: every $Y\in Z$ maps
$\mathbb F_q^n$ into $U$, while every $X\in Z$ vanishes on $U$, so $XY=0$
for all $X,Y\in Z$. Moreover,
$Z\cong\operatorname{Hom}_{\mathbb F_q}(W,U)$ as an $\mathbb F_q$-vector
space, and hence
$\dim_{\mathbb F_q}Z=(\dim U)(\dim W)=\lfloor n^2/4\rfloor$. Thus
$Z\setminus\{0\}$ (regarded as a subset of $C$ supported on this factor), together with the matrix whose only nonzero entry equals $1\in\F_q$ and belongs to the first row and column, is
a clique, and therefore
\[
    q^{\lfloor n^2/4\rfloor}\le\ell.
\]

Since $|M_n(\mathbb F_q)|=q^{n^2}$ and
$n^2\le5\lfloor n^2/4\rfloor$ for $n\ge2$, it follows that
\[
    |M_n(\mathbb F_q)|
    =q^{n^2}
    \le q^{5\lfloor n^2/4\rfloor}
    =\left(q^{\lfloor n^2/4\rfloor}\right)^5
    \le\ell^5.
\]

Consequently,
\[
    |C|
    =\prod_{i=1}^r |M_{n_i}(\mathbb F_{q_i})|
    \le\ell^{5r}
    \le\ell^{\frac{5\ell}{3}}.
\]

The final assertion follows from $\chi(\Gamma(C))\le|C|$.
\end{proof}

Before proving \Cref{thm:ring-chi-bounded}, some simple definitions remain.
Given a proper coloring $c$ of the zero-divisor graph $\Gamma(R)$ of a ring $R$, define $\widehat c(x)$ as an extension to all elements of $R$ as follows:
\[
 \widehat c(x)=
 \begin{cases}
   c(x)& \text{ if } x\in V(\Gamma(R)),\\
   \star_0& \text{ if }x=0, \text{ and}\\
   \star_r&\text{ if }x\ne0\text{ and $x$ is not a one-sided zero-divisor},
 \end{cases}
\]
where $\star_0$ is a new color and $\star_r$ is one of the colors in $c$. We have:

\begin{obs}\label{obs:hat}
Let $R$ be a ring and $c$ a proper $\ell$-coloring of $\Gamma(R)$. Then all $x,y\in R$ satisfy
\begin{equation}\label{eq:extended-coloring}
    xy=0\text{ or }yx=0
    \quad\Longrightarrow\quad
    \widehat c(x)\ne\widehat c(y),
\end{equation} 
and $\widehat c$ uses at most
$\ell+1$ colors.
\end{obs}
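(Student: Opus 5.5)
We have to check two things: the implication \eqref{eq:extended-coloring} and the color count. Both follow by a case analysis on which of $x,y$ are zero, which are one-sided zero-divisors, and which are neither (call these \emph{regular}).

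\textbf{Color count.} Every value of $\widehat c$ other than $\star_0$ is either a color of $c$ or $\star_r$, and $\star_r$ was chosen among the colors of $c$. So $\widehat c$ uses at most the $\ell$ colors of $c$ together with $\star_0$, that is, at most $\ell+1$ colors.

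\textbf{The implication.} Suppose $xy=0$ or $yx=0$. We read the hypothesis with $x\neq y$, since for $x=y\neq 0$ the conclusion is impossible; the intended use is for adjacency between distinct elements, or for $x$ versus $y$ where $x\neq y$ is implicit. We distinguish four cases.

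First, suppose one of them is $0$, say $x=0$. Then $\widehat c(x)=\star_0$. If $y\neq 0$, then $\widehat c(y)$ is a color of $c$, hence $\ne\star_0$ because $\star_0$ is new. If $y=0$, then $x=y$, which is excluded.

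Second, suppose both are nonzero. A nonzero product relation among them shows that each is a one-sided zero-divisor. If $xy=0$ with $y\neq 0$, then $x$ is a zero-divisor, witnessed by $y$. Symmetrically, $x\neq 0$ makes $y$ a zero-divisor. So the case where either of them is regular cannot occur, and both $x$ and $y$ are vertices of $\Gamma(R)$.

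Third, in that situation, if $x\neq y$ they are adjacent in $\Gamma(R)$, so $c(x)\neq c(y)$ because $c$ is proper. This gives $\widehat c(x)\ne\widehat c(y)$.

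Fourth, the only other situation is $x=y$, which is excluded. This is only a matter of reading the statement for distinct elements.

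There is no real obstacle here. The one point needing care is noticing that regular elements never satisfy the hypothesis with a nonzero partner. That is why $\star_r$ may safely reuse an existing color, and the only new color needed is $\star_0$.
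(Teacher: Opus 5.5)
Your proof is correct and is the routine case check the paper leaves implicit; the paper states this as an observation and gives no proof. You are also right that the implication has to be read for $x\neq y$, since $x=y=0$ satisfies the hypothesis but not the conclusion, and the paper only ever applies it to distinct elements.
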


A proper two-sided ideal $Q\subsetneq R$ is \emph{prime} if for every two-sided ideals
$I,J\subseteq R$, the inclusion $IJ\subseteq Q$ implies $I\subseteq Q$ or
$J\subseteq Q$. The \emph{prime radical} $P=P(R)$ of $R$ is the intersection of all
prime ideals of $R$. An ideal is said to be \emph{nil} when each of its elements is nilpotent, while a ring is said to be \emph{semiprime} when it has no nonzero nilpotent ideals.

\ringchibounded* 
\begin{proof}
If $\Gamma(R)$ has no vertices there is nothing to prove, so let
$k=\omega(\Gamma(R))\ge1$. The nonzero elements of a zero subring constitute a
clique in $\Gamma(R)$, and consequently every zero subring of $R$ has at
most $k+1$ elements.

Let $P=P(R)$ be the prime radical of $R$.
 It is a standard fact that $P$ is nil and $S=R/P$ is semiprime (see~\cite[Section~10]{Lam01}).

Now, Bell proved \cite[Theorem~1]{Bell01} that every infinite nil ring contains an infinite zero subring. Hence, $P$ is finite. Since  every finite nil ring is nilpotent by ~\cite[Corollary~4.13]{Lam01}, $P$ is nilpotent.

First, we bound the nilpotency index $t$ of $P$. For every $\lfloor t/2\rfloor\le i<t$ choose
$x_i\in P^i\setminus P^{i+1}$. These elements are distinct and nonzero. For
distinct $i,j$ in this range we have $i+j\ge t$, and hence
$x_ix_j=x_jx_i=0$. Thus they form a clique of size $\lceil t/2\rceil$ in
$\Gamma(R)$, so $t\le2k$. 
By \Cref{lem:nilpotent-order} applied to $P$, it follows that $|P|$ is bounded by a function of $k$.

Now consider the semiprime ring $S$. A further
application of \Cref{lem:finite-ideal-quotient} shows that
$\ell:=\omega(\Gamma(S))$ is bounded in terms of $k$. This implies that every zero subring of
$S$ is finite, since its nonzero elements form a clique. 

Hence, a result of Bell~\cite[Theorem~2]{Bell01} on 
semiprime rings with finite zero subrings yields
\begin{equation}\label{eq:bell-decomposition}
    S=B\oplus C,
\end{equation}
where $B$ is reduced and $C$ is a direct sum of finitely many full matrix rings over finite fields. 
Without loss of generality, absorb the $1\times1$ matrix factors of $C$ into the reduced summand $B$.

By \Cref{lem:reduced-ring},
$\chi(\Gamma(B))=\omega(\Gamma(B))\le\ell$. Choose an optimal coloring
$c_B$ of $\Gamma(B)$. By~\Cref{obs:hat} the extension $\widehat c_B$ uses at
most $\ell+1$ colors.

After the absorption, $C$ is a finite direct sum of full matrix rings of size at least
$2\times2$. Since $\Gamma(C)$ embeds into $\Gamma(S)$, we have
$\omega(\Gamma(C))\le\ell$, and \Cref{lem:matrix-sum} then gives
\[
    |C|\le \ell^{\frac{5\ell}{3}}.
\]

Color a vertex $(b,c)$ of $\Gamma(S)$ by
$\bigl(\widehat c_B(b),c\bigr)$. If two distinct vertices receive the same
pair, then their $C$-coordinates are equal and their $B$-coordinates are
distinct. If they were adjacent, then their $B$-coordinates would have a
zero product in one of the two orders, contradicting
\eqref{eq:extended-coloring}. Thus we have a proper coloring of $\Gamma(S)$ and
\[
    \chi(\Gamma(S))\le |C|(\ell+1),
\]
which is bounded in terms of $k$.

Choose one representative $s(\overline x)\in R$ for each coset
$\overline x\in S$.  Thus
\[
    s(\overline x)+P=\overline x,
\]
and every $x\in R$ can be written uniquely in the form
\[
    x=s(\overline x)+p_x,
    \qquad\text{with }p_x\in P.
\]
 Choose a proper coloring $c_S$ of
$\Gamma(S)$ with $\chi(\Gamma(S))$ colors and extend it to
$\widehat c_S$. For each vertex $x$ of $\Gamma(R)$, assign the color
$\bigl(\widehat c_S(\overline x),p_x\bigr)$. If distinct vertices $x,y$ receive
the same color, then $p_x=p_y$ and hence $\overline x\ne\overline y$. If, say,
$xy=0$, then $\overline x\,\overline y=0$ in $S$, so \eqref{eq:extended-coloring} gives
$\widehat c_S(\overline x)\ne\widehat c_S(\overline y)$, a contradiction; the case
$yx=0$ is identical. Therefore
\[
    \chi(\Gamma(R))
    \le |P|\bigl(\chi(\Gamma(S))+1\bigr),
\]
and the right-hand side depends only on $k$. This proves the theorem.
\end{proof}

\begin{cor}\label{cor:asymptotic-upper}
The function in \Cref{thm:ring-chi-bounded} can be chosen to be a tower of $2$'s of height $O(k)$, with $k = \omega(\Gamma(R))$.
\end{cor}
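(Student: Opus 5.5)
We obtain the corollary by going through the proof of \Cref{thm:ring-chi-bounded} and bounding every quantity by an explicit tower. The only non-elementary inputs are multicolor Ramsey numbers. We use the classical bound $\mathcal R_q(s)\le q^{qs}$, which follows from the standard neighbourhood-chasing argument. This gives
\[
\mathcal R_{2m}\bigl((k+1)(k+2)\bigr)\le (2m)^{2m(k+1)(k+2)}\le 2^{2^{O(\log m+\log k)}\cdot m}.
\]
For $m,k\ge 2$ this is at most $2^{2^{m+k}}$, up to adjusting constants. In particular, if $m$ and $k$ are at most a tower of height $h$, then the right-hand side is at most a tower of height $h+2$. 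We write $T(h)$ for a tower of $2$'s of height $h$ and use throughout that $T(h)^{T(h)}\le T(h+2)$.

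First we handle the sequence $k_i$ of \Cref{lem:nilpotent-order}. Since $k_{i+1}=\mathcal R_{2(k_i+1)}((k_i+1)(k_i+2))-1$, the estimate above gives $k_{i+1}\le T(h_i+2)$ whenever $k_i\le T(h_i)$. Starting from $k_0=k\le T(O(\log^* k))$, we get $k_i\le T(2i+O(\log^*k))$. The proof of the theorem shows that $t\le 2k$, so only indices $i\le 2k-2$ occur. Hence
\[
|P|\le\prod_{i=0}^{2k-2}(k_i+1)\le \bigl(T(4k)+1\bigr)^{2k}\le T(4k+O(1)),
\]
where the last step again uses $T(h)^{T(h)}\le T(h+2)$. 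So $|P|\le T(4k+O(1))$.

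Next, applying \Cref{lem:finite-ideal-quotient} with $m=|P|$ gives $\ell\le \mathcal R_{2|P|}((k+1)(k+2))\le T(4k+O(1))$. Then $|C|(\ell+1)\le \ell^{5\ell/3}(\ell+1)\le T(4k+O(1))$ as well. Finally,
\[
\chi(\Gamma(R))\le |P|\bigl(\chi(\Gamma(S))+1\bigr)\le T(4k+O(1))^2\le T(4k+O(1)).
\]
This is a tower of height $O(k)$, as claimed.

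The main obstacle is the iteration in \Cref{lem:nilpotent-order}. There we must check that each step $k_i\mapsto k_{i+1}$ adds only a constant number of levels to the tower, rather than, say, doubling the height. This is where the crude estimate $\mathcal R_q(s)\le q^{qs}$ suffices. Since the color count $q=2(k_i+1)$ and the clique size $s$ are both polynomial in $k_i$, we get $\mathcal R\le 2^{2^{O(k_i)}}$, and exponentiating twice is exactly two extra tower levels. The remaining steps add only $O(1)$ levels each, so the total height is linear in $k$.
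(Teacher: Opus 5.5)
Your proposal is correct and follows essentially the same route as the paper. Like the paper, you use the crude multicolor Ramsey bound to show that each step $k_i\mapsto k_{i+1}$ in \Cref{lem:nilpotent-order} adds only $O(1)$ tower levels. The nilpotency index is at most $2k$, so $|P|$ is a tower of height $O(k)$. The remaining steps (bounding $\ell$, $|C|\le\ell^{5\ell/3}$, and the two product bounds on $\chi$) each add only $O(1)$ further levels. Some of your explicit level counts are slightly off: from the form $2^{2^{m+k}}$ with $m,k\le T(h)$ you only get $T(h+3)$, and the $O(\log^* k)$ offset is not literally absorbed into $T(4k)$. These slips only change additive constants in the height, so the $O(k)$ conclusion is unaffected.
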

\begin{proof}
We use the standard estimate
\[
    \mathcal R_q(s)\le q^{q(s-1)}
\]
(see, for instance, \cite[Chapter~1]{GRS90}).  For the sequence
$(k_i)$ in \Cref{lem:nilpotent-order}, this gives
\[
    k_{i+1}+1
    \le
    \bigl(2(k_i+1)\bigr)^{
        2(k_i+1)((k_i+1)(k_i+2)-1)}
    =
    2^{k_i^{O(1)}}.
\]
Hence each step increases the height of an exponential tower by at most
a constant.

In the proof of \Cref{thm:ring-chi-bounded}, the prime radical $P$ has
nilpotency index at most $2k$.  By \Cref{lem:nilpotent-order}, $|P|$ is
therefore bounded by a tower of $2$'s of height $O(k)$.  Applying
\Cref{lem:finite-ideal-quotient} and the same Ramsey estimate shows that
\[
    \ell:=\omega(\Gamma(R/P))
\]
is bounded by a tower of $2$'s of height $O(k)$ as well.

Finally, \Cref{lem:matrix-sum} gives
$|C|\le\ell^{\frac{5\ell}{3}}$, while the proof of
\Cref{thm:ring-chi-bounded} gives
\[
    \chi(\Gamma(R/P))\le |C|(\ell+1)
    \quad\text{and}\quad
    \chi(\Gamma(R))
    \le |P|\bigl(\chi(\Gamma(R/P))+1\bigr).
\]
These operations increase the tower height by only a constant.  Hence
$f(k)$ can be chosen to be bounded by a tower of $2$'s of height $O(k)$.
\end{proof}

\section{Examples}\label{sec:examples}

In this section, we first construct finite commutative unital rings whose zero-divisor graphs give a superpolynomial lower bound for the binding
function of \Cref{thm:ring-chi-bounded}.  We then further contrast \Cref{thm:ring-chi-bounded} by showing that neither in semirings nor in the nonassociative setting a binding function exists.

\subsection{Several alternating forms}\label{subsec:many-forms}

Let $V,W$ be finite-dimensional vector spaces over $\F_2$, and let
\[
 B:V\times V\rightarrow W
\]
be an \textit{alternating} bilinear map, which means that $B(v,v)=0$ for every $v\in V$.  Since the characteristic of the base field is two, $B$ is
symmetric. A subspace $U\leq V$ is said to be \textit{totally isotropic} for $B$ if $B(U,U)=0$, and we denote by $\iota(B)$ the maximum dimension of a totally isotropic subspace for $B$, i.e. \[
 \iota(B)=\max\{\dim U:U\le V\text{ and } B(U,U)=0\}.
\]

On $A_B=V\oplus W$, we define the associative and commutative multiplication
\begin{equation}\label{eq:two-step-product}
 (u,a)\cdot_{A_B}(v,b)=(0,B(u,v)).
\end{equation}

Notice that for $x=(u,a)$ and $y=(v,b)$ in $A_B$,
\begin{equation}\label{eq:two-step-adjacency}
 xy=0\quad\Longleftrightarrow\quad B(u,v)=0.
\end{equation} 

The following elementary probabilistic lemma is the only existence result
needed for the stronger construction.

\begin{lem}\label{lem:random-isotropic}
For every $t\ge2$ there is an alternating bilinear map
\[
 B:\F_2^n\times\F_2^n\rightarrow\F_2^t,
 \ \text{ with }\ n=\binom{t+1}{2},
\]
such that $\iota(B)\le t$.
\end{lem}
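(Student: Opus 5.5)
We pick $B$ uniformly at random and use a first-moment argument over the candidate isotropic subspaces. An alternating bilinear map $\F_2^n\times\F_2^n\to\F_2^t$ is the same as a $t$-tuple $(B_1,\dots,B_t)$ of alternating bilinear forms on $\F_2^n$. Each $B_s$ is determined by its values $B_s(e_i,e_j)$ for $i<j$ on the standard basis, and these values can be chosen freely. We choose all $t\binom n2$ such values independently and uniformly in $\F_2$. It suffices to show that, with positive probability, no $(t+1)$-dimensional subspace $U\le\F_2^n$ is totally isotropic, since then $\iota(B)\le t$.

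Fix $U$ with $\dim U=t+1$ and a basis $u_1,\dots,u_{t+1}$. Then $U$ is totally isotropic for $B$ exactly when $B_s(u_a,u_b)=0$ for all $s$ and all $a<b$. This is because alternating forms vanish on the diagonal, and symmetry handles the remaining pairs. For a single $s$, the map sending $B_s$ to its restriction $B_s|_{U\times U}$ is a surjective linear map from the space of alternating forms on $\F_2^n$ onto the space of alternating forms on $U$. The latter has dimension $\binom{t+1}2$. Surjectivity holds because we may extend $u_1,\dots,u_{t+1}$ to a basis of $\F_2^n$, and an alternating form can be prescribed arbitrarily on pairs of basis vectors. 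Hence the restriction of a uniform $B_s$ is uniform, so it vanishes with probability $2^{-\binom{t+1}2}$. By independence over $s$,
\[
\Pr[B(U,U)=0]=2^{-t\binom{t+1}{2}}=2^{-tn}.
\]

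The number of $(t+1)$-dimensional subspaces of $\F_2^n$ is at most $2^{n(t+1)}/|\mathrm{GL}_{t+1}(\F_2)|$. Crudely, it is the Gaussian binomial $\binom{n}{t+1}_2\le c\,2^{(t+1)(n-t-1)}$, where $c=\prod_{i\ge1}(1-2^{-i})^{-1}<4$. The expected number of totally isotropic $(t+1)$-subspaces is therefore at most
\[
c\,2^{(t+1)(n-t-1)-tn}=c\,2^{\,n-(t+1)^2}=c\,2^{\frac{(t+1)t}{2}-(t+1)^2}=c\,2^{-\frac{(t+1)(t+2)}{2}}.
\]
For $t\ge2$ this is at most $4\cdot2^{-6}<1$. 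So some choice of $B$ has no totally isotropic subspace of dimension $t+1$. A totally isotropic subspace of dimension greater than $t$ would contain one of dimension exactly $t+1$, so this gives $\iota(B)\le t$.

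The main point requiring care is the probability computation for a fixed $U$: the restriction map must be surjective, so that the $\binom{t+1}2$ conditions for each $s$ are independent and uniform. Extending a basis of $U$ to a basis of $\F_2^n$ settles this. The rest is an explicit count, and the choice $n=\binom{t+1}2$ makes the exponent $n-(t+1)^2$ comfortably negative.
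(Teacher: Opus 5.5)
Your proposal is correct and follows essentially the same route as the paper. Both arguments take a uniformly random alternating map: the paper uses a random linear map $\Lambda^2V\to W$, which is equivalent to your independent uniform choice of the values $B_s(e_i,e_j)$. Both then show that a fixed $(t+1)$-dimensional subspace is totally isotropic with probability $2^{-t\binom{t+1}{2}}$, and finish with a union bound over the Gaussian binomial count of such subspaces. The only difference is cosmetic: you bound that count by $c\,2^{(t+1)(n-t-1)}$ with $c<4$, while the paper uses the cruder bound $2^{(t+1)(n-t)}$.
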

\begin{proof}
Put $V=\F_2^n$, $W=\F_2^t$, and choose uniformly at random a linear map
\[
 \widetilde B:\Lambda^2V\rightarrow W.
\]
It determines an alternating bilinear map $B:V\times V\rightarrow W$ given by $B(v,w)=\widetilde B(v\wedge w)$ for every $v,w\in V$.  Let $r=t+1$.  For any fixed
$r$-dimensional subspace $U\le V$, the restriction of $\widetilde B$ to
$\Lambda^2U$ is uniformly random and $B(U,U)=0$ if and only if $\widetilde B \,\big|_{\Lambda^2U}=0$. Therefore,
\[
 \Pr[B(U,U)=0]=2^{-\dim(W)\dim(\Lambda^2U)}=2^{-t\binom r2}.
\]

 The number of $r$-subspaces of $V$ is the Gaussian binomial coefficient
$\genfrac{[}{]}{0pt}{}{n}{r}_2$, and the product formula gives
\[
 \genfrac{[}{]}{0pt}{}{n}{r}_2=\prod_{i=0}^{r-1}\frac{2^n-2^i}{2^r-2^i}<\prod_{i=0}^{r-1}\frac{2^n}{2^{r-1}}=2^{r(n-r+1)}.
\]

By the union bound,
\[
 \Pr[\text{some $r$-dimensional subspace is totally isotropic}]
 <2^{r(n-r+1)-t\binom r2}
 =2^{-t(t+1)/2}<1.
\]

Thus a suitable $B$ exists.
\end{proof}

\superpolylower*
\begin{proof}
    For every $t\geq 2$, let us consider an alternating bilinear map $B$ satisfying the conditions in the statement of Lemma \ref{lem:random-isotropic} and build the ring $R_t=A_B$ by the construction presented at the beginning of this subsection.
Choose coordinates
\[
 B(u,v)=\bigl(B_1(u,v),\ldots,B_t(u,v)\bigr).
\] 

Let $C$ be a clique in $\Gamma(R_t)$ and let $P\subseteq V$ be the set of projections of its
elements.  By \eqref{eq:two-step-adjacency}, $B(u,v)=0$ for distinct
$u,v\in P$, while $B(u,u)=0$ by alternation.  Hence the subspace
$\Span(P)$ of $V$ generated by the elements of $P$ is totally isotropic, so $\dim(\Span(P))\le t$.  There are at most
$2^t$ elements in $\Span(P)$ and $2^t$ choices of $W$-coordinates for each of
them.  Therefore, excluding 0 we get that
\[
 |C|\le2^{2t}-1.
\]

Let now $I$ be an independent set in $\Gamma(R_t)$.  Its projection to $V$ is injective, since the product of two
distinct elements with the same projection is zero.  Let $S\subseteq
V$ be the set of projections.  For distinct $u,v\in S$ we have
$B(u,v)\ne0$.  For every $u\in S$ we define the indicator over $\mathbb F_2$ of the condition $B(u,x)=0$,
\[
 f_u(x)=\prod_{i=1}^t\bigl(1+B_i(u,x)\bigr),
\]
 and for every $u,v\in S$ we have
\[
 f_u(v)=
 \begin{cases}
  1& \text{if }u=v,\text{ and}\\
  0&\text{if }u\ne v.
 \end{cases}
\]
The functions $f_u$, with $u\in S$, are therefore linearly independent, and each of them
has degree at most $t$ because every $B_i$ is linear.  Every Boolean function, i.e., every function $\F_2^n \rightarrow \F_2$, has a unique
multilinear representative on $\mathbb F_2[x_1,\dots,x_n]$, and a basis of the space of multilinear functions of degree at most
$t$ is the set of all square-free monomials of degree at most $t$ that use at most $n$ variables.  Consequently
\[
 |I|=|S|\le\sum_{j=0}^t\binom nj.
\]

Moreover, the number of vertices of $\Gamma(R_t)$ is essentially $|A_B|=2^{n+t}=2^{t(t+3)/2}=2^{\Theta(t^2)}$ and, with $n=\binom{t+1}{2}$,
\[
 \alpha(\Gamma(R_t))\leq\sum_{j=0}^t\binom nj
 \le (t+1)\binom nt
 \le (t+1)\left(\frac{en}{t}\right)^t
 =2^{O(t\log t)},
\] where we have used the inequality ${n\choose t}\leq \left(\frac{en}{t}\right)^t$ that can be found for example in \cite{AS16}.

This allows us to conclude that $\chi(\Gamma(R_t))\geq \frac{|V(\Gamma(R_t))|}{\alpha(\Gamma(R_t))}\geq \frac{2^{\Theta(t^2)}}{2^{O(t\log t)}}=2^{\Omega(t^2)}$.

Finally, in order to obtain unital rings, we consider the \emph{unitization}  $
 A_B^+=\F_2\oplus A_B$, with multiplication defined as \[(m,r)\cdot_{A_B^+}(n,s)=(m\cdot_{\mathbb F_2}n,\,m\cdot s+n\cdot r+r\cdot_{A_B}s)\]

It is well-known that $A_B^+$ is a finite commutative ring and has $(1,0)$ as its unit element, see~\cite[p.~11]{Schafer66}.

Since $A_B$ is isomorphic to $\{0\}\oplus A_B$, we can see $A_B$ as a subset of $A_B^+$. Under this consideration, we can say that every element outside $A_B$ is its own inverse and, hence, a unit in $A_B^+$, while $x^2=0$ for every $x\in A_B$.

Hence the set of  zero-divisors of $A_B^+$ is
\[
 V(\Gamma(A_B^+))=A_B\setminus\{0\},
\]
and given $(0,r),(0,s)\in A_B^+$, then $(0,r)\cdot_{A_B^+}(0,s)=(0,r\cdot_{A_B}s)$, so the edges of $\Gamma(A_B^+)$ are determined by (\ref{eq:two-step-adjacency}).

Consequently, the zero-divisor graphs of $A_B^+$ and $A_B$ are the same and we get an infinite family of finite commutative unital rings $R_t$ with \[ 
\omega(\Gamma(R_t))\leq 2^{O(t)}\qquad \text{and} \qquad \chi(\Gamma(R_t))\geq 2^{\Omega(t^2)}.\] 
\end{proof}

\subsection{Semirings}\label{subsec:semirings}

\semiring*
\begin{proof}
By \cite[Theorem~3]{DD05}, there exists a finite commutative
$3$-nilpotent semigroup $S$ with zero $0$ such that $
\Gamma(S)\cong G$. One such semigroup is  $S = \{0,c,s_1,\ldots,s_n\}$, where the element $c\in S$ corresponds to the dominating
vertex of $G$, and $s_1,\dots,s_n$ correspond to the remaining vertices. The operation in $S$ is defined as follows: $cs=sc=0$ for every $s\in S$, $s_i^2=c$ for every $i\in\{1,\dots,n\}$, and for $i\neq j$ \[
s_is_j=s_js_i=\begin{cases}
    0 & \text{ if }\{s_i,s_j\}\in E(G), \text{ and}\\
    c & \text{ if }\{s_i,s_j\}\notin E(G).
\end{cases}
\]

Adjoin an identity $1$ to $S$, obtaining the commutative monoid $S^1$, and set $R=\mathcal P(S^1\setminus\{0\})$ to be the set of subsets of $S^1\setminus\{0\}$. 
For $A,B\in R$, define
$$
A+B=A\cup B
\qquad\text{and}\qquad
AB=\{ab:a\in A,\ b\in B,\ ab\neq0\}.
$$

Then $R$ is a finite commutative unital semiring with idempotent addition.
Every nonempty subset of $S\setminus{0}$ is a zero-divisor, since it is
annihilated by $\{c\}$, whereas no subset containing $1$ is a zero-divisor.
Hence, $
V(\Gamma(R))
=
\mathcal P(S\setminus\{0\})\setminus\{\emptyset\}
$ and the vertices $
X=\bigl\{\{s\}:s\in S\setminus\{0\}\bigr\}
$ induce a copy of $\Gamma(S)\cong G$.

Fix a linear order on $S\setminus\{0,c\}$ and define $
r:V(\Gamma(R))\rightarrow X
$ by

$$
r(A)=
\begin{cases}
\{c\}&\text{if }A=\{c\},\text{ and}\\
\{\min(A\setminus\{c\})\}&\text{if }A\neq\{c\}.
\end{cases}
$$

If $A, B$ are adjacent in $\Gamma(R)$ and neither is $\{c\}$, then, writing
$r(A)={x}$ and $r(B)={y}$, we have $xy=0$.  Moreover, $x\neq y$ since
in the above realization $x^2\neq0$.  Hence ${x}$ and ${y}$ are adjacent
in $X$.  If one of $A,B$ equals $\{c\}$, their images are adjacent because
$c$ corresponds to the dominating vertex of $G$.  Thus $r$ is a graph
homomorphism.  Since it fixes $X$ pointwise, it is a retraction onto
$X\cong G$.
\end{proof}

\subsection{An explicit nonassociative construction}\label{subsec:free-nonassoc}
A commutative
nonassociative $K$-algebra is a $K$-vector space with a commutative bilinear
multiplication; see, for example, \cite{Schafer66}.  We use the same
definition of $\Gamma(R)$ as before, since it only involves the binary multiplication.

Given a field $K$ and symbols $V={x_1, \ldots, x_n}$ called \emph{generators}, the \emph{free commutative nonassociative unital $K$-algebra} $A^K_V$ can be seen as the set of all formal linear $K$-combinations of rooted binary trees whose leaves are labeled by $V$, together with the empty tree $1$, see~\cite{Petrogradsky05}. Multiplication of two nonempty binary trees is just hanging them under a new common root, while $1$ is the identity.

Given a finite graph $G=(V,E)$ without isolated vertices, define
\begin{equation}\label{eq:free-quotient}
A_G=A^K_V
\Big/\bigl\langle x_ux_v:uv\in E\bigr\rangle.
\end{equation}

Hence, a nonempty tree survives in $A_G$ if and only if no internal vertex has two children that are leaves labeled $u$ and
$v$ for some edge $uv$ of $G$.

\free*
\begin{proof}
First observe that no element with nonzero scalar part is a zero-divisor. Indeed, suppose
$$
(\lambda 1+a)b=0
$$
with $\lambda\neq0$, $a$ a linear combination of nonempty trees, and $b\neq0$. The scalar part of $b$ must be zero. Choose a tree of smallest size occurring in $b$. It occurs with nonzero coefficient in $\lambda b$, while every tree occurring in $ab$ is strictly larger, since multiplication by a nonempty tree strictly increases the size. Hence this term cannot cancel, a contradiction.

Suppose now $a,b\in \Gamma(A_G)$ and $ab=0$, written as linear combinations of nonempty trees. Observe that no tree $s$ can appear in both of $a,b$, since then $s^2$ would occur in $ab$ with nonzero coefficient.

Now let trees $s$ and $t$ belong to $a$ and $b$, respectively. The coefficient of the tree $st$ in $ab$ cannot cancel with any other term. Hence $st$ must be zero in $A_G$. Since $s$ and $t$ themselves survive in $A_G$, the forbidden product must occur at the new root. Hence both are generators, say $s=x_u$ and $t=x_v$, with $uv\in E$. Consequently every zero-divisor of $A_G$ is a linear combination of generators, and if $a=\sum_{u\in U}\alpha_u x_u$ and $b=\sum_{v\in W}\beta_v x_v$, then $ab=0$ if and only if $uv\in E$ for all $u\in U$ and $v\in W$.

Since $G$ has no isolated vertices, every generator $x_v$ is a zero-divisor, and $x_ux_v=0$ if and only if $uv\in E$. Thus the generators induce a copy of $G$ in $\Gamma(A_G)$. Conversely, choosing one vertex from the support of each zero-divisor defines a graph homomorphism $\Gamma(A_G)\to G$ which fixes the generators, and hence $G$ is a retract of $\Gamma(A_G)$.
\end{proof}

\subsection{Finite commutative nonassociative examples}
\label{subsec:finite-nonassoc}
We use the classical theorem of Erd\H{o}s that for all $g,k$ there is a finite
graph of girth at least $g$ and chromatic number at least $k$
\cite{Erdos59}.  We also use the elementary fact that if $Q,V$ are
$\F_2$-vector spaces with $\dim V=n$ and a linear map $T:Q\to V$ is chosen
uniformly at random, then for every $d$-dimensional subspace $L\le Q$,
\begin{equation}\label{eq:random-linear}
 \Pr[T(L)=0]=2^{-nd}.
\end{equation}

\nonassocunbounded*
\begin{proof}

For every $k$ we can choose a finite graph $G$ on $[n]$ with 
\[
 \girth(G)\ge6\text{ and } \chi(G)\ge k.
\]

Let $V=\F_2^n$ with basis
$e_1,\ldots,e_n$, and consider
\[
 F_G=\Span\{e_i\wedge e_j\,:\,\{i,j\}\in E(G)\}\le\Lambda^2V
 \text{ and } Q=\Lambda^2V/F_G.
\]

Let $\pi:\Lambda^2V\to Q$ be the quotient map.  Choose a random linear map
$T:Q\to V$ and define a multiplication on $V$ by
\begin{equation}\label{eq:nonassoc-product}
 xy:=T\bigl(\pi(x\wedge y)\bigr).
\end{equation}
This multiplication is commutative since $x\wedge y=y\wedge x$ in characteristic two, and $x^2=0$ for all $x\in V$.  Thus, we can consider $R:=V$ as a commutative ring with the previous multiplication, and every nonzero element of $R$ is a
zero-divisor.

If $\{i,j\}\in E(G)$, then $e_ie_j=0$.  Hence $G$ is a subgraph of $\Gamma(R)$ and $\chi(\Gamma(R))\geq k$.

Let $U\le V$ have dimension five and choose a row-echelon basis of $U$ with
pivot coordinates $p_1,\ldots,p_5$.  The ten wedge products of these basis
vectors have distinct leading coordinates the coefficients of
$e_{p_i}\wedge e_{p_j}$.  Consequently
\[
 \dim(\Lambda^2U\cap F_G)
 \le |E(G[\{p_1,\ldots,p_5\}])|.
\]
Since $G$ has girth at least six, the latter graph is a forest and has at most
four edges.  Hence
\[
 \dim\pi(\Lambda^2U)\ge\binom52-4=6.
\]

For this fixed $U$, using (\ref{eq:random-linear}) we have that
\[
 \Pr[U^2=0]=\Pr[T(\pi(\Lambda^2U))=0]\le2^{-6n}.
\]

There are fewer than $2^{5n}$ five-dimensional subspaces of $V$, so 
\[
 \Pr[\text{some five-space }U\text{ satisfies }U^2=0]<2^{-n}<1.
\]

Fix $T$ for which this bad event does not occur.

We know that $\chi(\Gamma(R))\ge k$.  If $C$ is a clique in $\Gamma(R)$,
then $xy=0$ for all $x,y\in C$, including $x=y$.  By bilinearity,
$\Span(C)^2=0$.  Thus $\dim\Span(C)\le4$ and
\[
 |C|\le2^4-1=15.
\] 

As we did in the proof of ~\Cref{thm:superpoly-lower}, to obtain commutative nonassociative rings which are also unital and satisy the conditions in the statement, we consider the unitization $R^+=\F_2\oplus R$, where \[(m,r)\cdot_{R^+}(n,s)=(m\cdot_{\mathbb F_2}n,\,m\cdot s+n\cdot r+T(\pi(r\wedge s))).\]

We have that $\chi(\Gamma(R^+))\geq\chi(\Gamma(R))\geq k$ and all the new vertices in $\Gamma(R^+)$ constitute an independent set, so the clique number can be increased in at most one unit and we get $\omega(\Gamma(R^+))\leq 16$.
\end{proof}

\subsection*{Statement on the use of AI} Generative AI tools contributed to the exploration and development of initial ideas for the proofs. The authors subsequently undertook substantial work to develop these ideas, verify and improve the arguments, and write, refine, and revise the manuscript as a whole. The authors take full responsibility for the correctness of the results and the final content of the paper.

\subsection*{Acknowledgments}
The authors were partially supported by the grant PID2022-137283NB-C22 funded by MICIU/AEI/10.13039/501100011033 and by ERDF/EU. Sara Asensio was also supported by European Social Fund Plus, Programa Operativo de Castilla y León and Junta de Castilla y León via its Consejería de Educación (and also via the project with reference CLU-2025-1-02-IMUVA). Kolja Knauer was also supported through the Severo Ochoa and Mar\'ia de Maeztu Program for Centers and Units of Excellence in R\&D (CEX2020-001084-M).

\small
\bibliography{Coloring_rings_lit_v12}

@book{AS16,
author = {Alon, Noga and Spencer, Joel H.},
title = {The Probabilistic Method},
year = {2016},
isbn = {1119061954},
publisher = {Wiley Publishing},
edition = {4th}
}

@article{AndrunakievicRjabuhin68,
  author  = {Andrunakievi{\v c}, V. A. and Rjabuhin, Ju. M.},
  title   = {Rings without nilpotent elements and completely simple ideals},
  journal = {Soviet Mathematics Doklady},
  volume  = {9},
  year    = {1968},
  pages   = {565--567}
}

@article{DS98,
 author = {Dumaldar, Mahesh N. and Sharma, Pramod K.},
 title = {Comments over ``{Some} non-chromatic rings''},
 fjournal = {Communications in Algebra},
 journal = {Communications in Algebra},
 issn = {0092-7872},
 volume = {26},
 number = {11},
 pages = {3871--3883},
 year = {1998},
 language = {English},
 doi = {10.1080/00927879808826380},
 zbMATH = {1218351},
 Zbl = {0908.05045}
}

@article{Kav22,
 author = {Kavaskar, Thanthony},
 title = {Beck's coloring of finite product of commutative ring with unity},
 fjournal = {Graphs and Combinatorics},
 journal = {Graphs and Combinatorics},
 issn = {0911-0119},
 volume = {38},
 number = {2},
 pages = {},
 note = {34},
 year = {2022},
 language = {English},
 doi = {10.1007/s00373-021-02401-x},
 zbMATH = {7490307},
 Zbl = {1482.13012}
}

@Article{PWJ17,
 Author = {Patil, Avinash and Waphare, B. N. and Joshi, Vinayak},
 Title = {Perfect zero-divisor graphs},
 FJournal = {Discrete Mathematics},
 Journal = {Discrete Mathematics},
 ISSN = {0012-365X},
 Volume = {340},
 Number = {4},
 Pages = {740--745},
 Year = {2017},
 Language = {English},
 DOI = {10.1016/j.disc.2016.11.027},
 zbMATH = {6680918},
 Zbl = {1355.05115}
}

@Article{DMS02,
 Author = {DeMeyer, F. R. and McKenzie, T. and Schneider, K.},
 Title = {The zero-divisor graph of a commutative semigroup},
 FJournal = {Semigroup Forum},
 Journal = {Semigroup Forum},
 ISSN = {0037-1912},
 Volume = {65},
 Number = {2},
 Pages = {206--214},
 Year = {2002},
 Language = {English},
 DOI = {10.1007/s002330010128},
 zbMATH = {1837299},
 Zbl = {1011.20056}
}

@InCollection{AB17,
 Author = {Anderson, David F. and Badawi, Ayman},
 Title = {The zero-divisor graph of a commutative semigroup: a survey},
 BookTitle = {Groups, modules, and model theory -- surveys and recent developments. In memory of R\"udiger G\"obel. Proceedings of the conference on new pathways between group theory and model theory, M\"ulheim an der Ruhr, Germany, February 1--4, 2016},
 ISBN = {978-3-319-51717-9; 978-3-319-51718-6},
 Pages = {23--39},
 Year = {2017},
 Publisher = {Cham: Springer},
 Language = {English},
 DOI = {10.1007/978-3-319-51718-6_2},
 zbMATH = {7220024},
 Zbl = {1436.20113}
}

@Article{DD05,
 Author = {DeMeyer, Frank and DeMeyer, Lisa},
 Title = {Zero divisor graphs of semigroups.},
 FJournal = {Journal of Algebra},
 Journal = {Journal of Algebra},
 ISSN = {0021-8693},
 Volume = {283},
 Number = {1},
 Pages = {190--198},
 Year = {2005},
 Language = {English},
 DOI = {10.1016/j.jalgebra.2004.08.028},
 zbMATH = {2153359},
 Zbl = {1077.20069}
}

@article{Beck88,
 author = {Beck, Istv{\'a}n},
 title = {Coloring of commutative rings},
 journal = {Journal of Algebra},
 volume = {116},
 number = {1},
 pages = {208--226},
 year = {1988},
 doi = {10.1016/0021-8693(88)90202-5}
}

@article{AN93,
 author = {Anderson, D. D. and Naseer, M.},
 title = {Beck's coloring of a commutative ring},
 journal = {Journal of Algebra},
 volume = {159},
 number = {2},
 pages = {500--514},
 year = {1993},
 doi = {10.1006/jabr.1993.1171}
}

@article{BDS98,
 author = {Bhatwadekar, S. M. and Dumaldar, Mahesh N. and Sharma, Pramod K.},
 title = {Some non-chromatic rings},
 journal = {Communications in Algebra},
 volume = {26},
 number = {2},
 pages = {477--505},
 year = {1998},
 doi = {10.1080/00927879808826143}
}

@article{Vietri13,
 author = {Vietri, Andrea},
 title = {A combinatorial analysis of zero-divisor graphs on certain polynomial rings},
 journal = {Communications in Algebra},
 volume = {41},
 number = {6},
 pages = {2040--2047},
 year = {2013},
 doi = {10.1080/00927872.2011.653068}
}

@article{Vietri15,
 author = {Vietri, Andrea},
 title = {A new zero-divisor graph contradicting {Beck's} conjecture, and the classification for a family of polynomial quotients},
 journal = {Graphs and Combinatorics},
 volume = {31},
 number = {6},
 pages = {2413--2423},
 year = {2015},
 doi = {10.1007/s00373-014-1501-6}
}

@article{TLW20,
 author = {Tang, Gaohua and Lin, Guangke and Wu, Yansheng},
 title = {Associate class graph of zero-divisors of a commutative ring},
 journal = {Journal of Algebra and Its Applications},
 volume = {19},
 number = {8},
 pages={},
 note = {2050155},
 year = {2020},
 doi = {10.1142/S0219498820501558}
}

@incollection{AAS11,
 author = {Anderson, David F. and Axtell, Michael C. and Stickles, Joe A. Jr.},
 title = {Zero-divisor graphs in commutative rings},
 booktitle = {Commutative Algebra: Noetherian and Non-Noetherian Perspectives},
 editor = {Fontana, Marco and Kabbaj, Salah-Eddine and Olberding, Bruce and Swanson, Irena},
 publisher = {Springer},
 address = {New York},
 pages = {23--45},
 year = {2011},
 doi = {10.1007/978-1-4419-6990-3_2}
}

@article{ScottSeymour20,
author  = {Scott, Alex and Seymour, Paul},
title   = {A survey of $\chi$-boundedness},
journal = {Journal of Graph Theory},
volume  = {95},
number  = {3},
pages   = {473--504},
year    = {2020},
doi     = {10.1002/jgt.22601}
}

@article{Reiher26,
author  = {Reiher, Christian},
title   = {Graphs of large girth},
journal = {Computer Science Review},
volume  = {62},
pages   = {},
note={100858},
year    = {2026},
doi     = {10.1016/j.cosrev.2025.100858}
}

@article{HP24,
 author = {Hala\v{s}, Radom{\'i}r and P{\'o}cs, Jozef},
 title = {On zero-divisor graphs of infinite posets},
 journal = {Soft Computing},
 volume = {28},
 pages = {12113--12118},
 year = {2024},
 doi = {10.1007/s00500-024-09958-8}
}

@article{DolzanOblak12,
author = {Dol{\v z}an, David and Oblak, Polona},
title = {The zero-divisor graphs of rings and semirings},
journal = {International Journal of Algebra and Computation},
volume = {22},
number = {4},
pages = {1250033},
year = {2012},
doi = {10.1142/S0218196712500336}
}

@article{AL99,
 author = {Anderson, David F. and Livingston, Philip S.},
 title = {The zero-divisor graph of a commutative ring},
 journal = {Journal of Algebra},
 volume = {217},
 number = {2},
 pages = {434--447},
 year = {1999},
 doi = {10.1006/jabr.1998.7840}
}

@article{NWD07,
 author = {Nimbhorkar, S. K. and Wasadikar, M. P. and DeMeyer, Lisa},
 title = {Coloring of meet-semilattices},
 journal = {Ars Combinatoria},
 volume = {84},
 pages = {97--104},
 year = {2007}
}

@book{GRS90,
  author    = {Graham, Ronald L. and Rothschild, Bruce L. and Spencer, Joel H.},
  title     = {Ramsey Theory},
  edition   = {2},
  publisher = {John Wiley \& Sons},
  address   = {New York},
  year      = {1990}
}

@article{Redmond02,
  author  = {Shane P. Redmond},
  title   = {The zero-divisor graph of a non-commutative ring},
  journal = {International Journal of Commutative Rings},
  volume  = {1},
  number  = {4},
  pages   = {203--211},
  year    = {2002}
}

@article{AM06,
  author  = {Saieed Akbari and Ali Mohammadian},
  title   = {Zero-divisor graphs of non-commutative rings},
  journal = {Journal of Algebra},
  volume  = {296},
  number  = {2},
  pages   = {462--479},
  year    = {2006},
  doi     = {10.1016/j.jalgebra.2005.07.007}
}

@article{AM07,
  author  = {Saieed Akbari and Ali Mohammadian},
  title   = {On zero-divisor graphs of finite rings},
  journal = {Journal of Algebra},
  volume  = {314},
  number  = {1},
  pages   = {168--184},
  year    = {2007},
  doi     = {10.1016/j.jalgebra.2007.02.051}
}

@article{Wu05,
  author  = {Tongsuo Wu},
  title   = {On directed zero-divisor graphs of finite rings},
  journal = {Discrete Mathematics},
  volume  = {296},
  number  = {1},
  pages   = {73--86},
  year    = {2005},
  doi     = {10.1016/j.disc.2005.03.006}
}

@book{Lam01,
  author    = {T. Y. Lam},
  title     = {A First Course in Noncommutative Rings},
  edition   = {2},
  series    = {Graduate Texts in Mathematics},
  volume    = {131},
  publisher = {Springer},
  address   = {New York},
  year      = {2001}
}

@article{TW06,
  author  = {Tang, Zhongming and Wan, Zhe-Xian},
  title   = {Symplectic graphs and their automorphisms},
  journal = {European Journal of Combinatorics},
  volume  = {27},
  number  = {1},
  year    = {2006},
  pages   = {38--50},
  doi     = {10.1016/j.ejc.2004.08.002}
}

@article{GodsilRoyle01,
  author  = {Godsil, Chris D. and Royle, Gordon F.},
  title   = {Chromatic Number and the 2-Rank of a Graph},
  journal = {Journal of Combinatorial Theory, Series B},
  volume  = {81},
  number  = {1},
  pages   = {142--149},
  year    = {2001},
  doi     = {10.1006/jctb.2000.2003}
}

@article{Bell01,
  author  = {Howard E. Bell},
  title   = {On zero subrings and periodic subrings},
  journal = {International Journal of Mathematics and Mathematical Sciences},
  volume  = {28},
  number  = {7},
  year    = {2001},
  pages   = {413--417},
  doi     = {10.1155/S0161171201006044}
}

@article{Klein80,
  author  = {Abraham A. Klein},
  title   = {A Simple Proof of a Theorem on Reduced Rings},
  journal = {Canadian Mathematical Bulletin},
  volume  = {23},
  number  = {4},
  year    = {1980},
  pages   = {495--496},
  doi     = {10.4153/CMB-1980-075-7}
}

@article{Erdos59,
  author  = {Erd{\H{o}}s, Paul},
  title   = {Graph theory and probability},
  journal = {Canadian Journal of Mathematics},
  volume  = {11},
  year    = {1959},
  pages   = {34--38},
  doi     = {10.4153/CJM-1959-003-9}
}

@book{Schafer66,
  author    = {Schafer, Richard D.},
  title     = {An Introduction to Nonassociative Algebras},
  series    = {Pure and Applied Mathematics},
  volume    = {22},
  publisher = {Academic Press},
  address   = {New York},
  year      = {1966}
}

@article{Petrogradsky05,
  author  = {Petrogradsky, Victor M.},
  title   = {Enumeration of algebras close to absolutely free algebras
             and binary trees},
  journal = {Journal of Algebra},
  volume  = {290},
  year    = {2005},
  pages   = {337--371},
  doi     = {10.1016/j.jalgebra.2004.01.034}
}
\bibliographystyle{my-siam}

\end{document}